%%%%%%%%%%%%%%%%
%      15.11.2017
%%%%%%%%%%%%%%%%
\documentclass[12pt,twoside,reqno]{amsart}
\usepackage{amsmath}
\usepackage{amsfonts}
\usepackage{amssymb}
\usepackage{color}
\usepackage{mathrsfs}
\usepackage{cite}
\textwidth 18cm
\textheight 20cm
\headheight 15pt
\headsep 0.2in
\oddsidemargin -1cm
\evensidemargin -1cm
\topmargin 0cm
%
% ENSEMBLES/SETS
%

%
% calligraphic letters
%

%
%
%
% ENVIRONNEMENTS/FRAMES
%
\newtheorem{theorem}{Theorem}[section]
\newtheorem{corollary}[theorem]{Corollary}
\newtheorem{lemma}[theorem]{Lemma}
\newtheorem{proposition}[theorem]{Proposition}

\numberwithin{equation}{section}

\newcommand{\tr}[1]{{\color{red}{#1}}}
\begin{document}
\title{Optimal extinction rates for the fast diffusion equation with strong absorption}
\thanks{}
\author{Razvan Gabriel Iagar}
\address{Instituto de Ciencias Matem\'aticas (ICMAT), Nicolas Cabrera 13-15, Campus de Cantoblanco,
E--28049, Madrid, Spain}
\email{razvan.iagar@icmat.es}
\address{Institute of Mathematics of the
Romanian Academy, P.O. Box 1-764, RO-014700, Bucharest, Romania.}

\author{Philippe Lauren\c{c}ot}
\address{Institut de Math\'ematiques de Toulouse, UMR~5219, Universit\'e de Toulouse, CNRS \\ F--31062 Toulouse Cedex 9, France}
\email{laurenco@math.univ-toulouse.fr}

\keywords{Extinction, optimal rates, fast diffusion, strong absorption}
\subjclass{35B40 - 35K67 - 35K57}

\date{\today}

%%%%%%%%%%%%%%%%
%%%%%%%%%%%%%%%%
\begin{abstract}
Optimal extinction rates near the extinction time are derived for non-negative solutions to a fast diffusion equation with strong absorption, the power of the absorption exceeding that of the diffusion.
\end{abstract}
%%%%%%%%%%%%%%%%
%%%%%%%%%%%%%%%%

\maketitle

%
%     HEADLINES
%
\pagestyle{myheadings}
\markboth{\sc{R.G. Iagar and Ph. Lauren\c cot}}{\sc{Optimal extinction rates for fast diffusion with absorption}}

%%%%%%%%%%%%%%%%
%%%%%%%%%%%%%%%%
\section{Introduction}
%%%%%%%%%%%%%%%%
%%%%%%%%%%%%%%%%

Given $m\in (0,\infty)$, $q\in (0,1)$, and a non-negative initial condition $u_0$ in $BC(\mathbb{R}^N)$, $u_0\not\equiv 0$, it is well-known that the initial value problem
\begin{subequations}\label{a1}
\begin{align}
\partial_t u - \Delta u^m + u^q & = 0\ , \qquad (t,x)\in (0,\infty)\times \mathbb{R}^N\ , \label{a1a} \\
u(0) & = u_0\ , \qquad x\in \mathbb{R}^N\ , \label{a1b}
\end{align}
\end{subequations}
has a unique non-negative (weak) solution $u$ which vanishes identically after a finite time, a phenomenon usually referred to as \textit{finite time extinction}\cite{Ka74, Ka87, Ke83}. More precisely, introducing the extinction time
\begin{equation}
T_e := \sup\{ t>0\ :\ u(t)\not\equiv 0\}>0\ , \label{a1c}
\end{equation}
then $T_e$ is finite and satisfies $T_e\le \|u_0\|_\infty^{(1-q)}/(1-q)$, the latter upper bound being a straightforward consequence of \eqref{a1} and the comparison principle. Moreover, there holds
\begin{equation}
u(t)\not\equiv 0 \;\text{ for }\; t\in [0,T_e) \;\text{ and }\; u(t) \equiv 0  \;\text{ for }\; t\ge T_e\ . \label{a2}
\end{equation}
When $q<m$ and $u_0(x)\to 0$ as $|x|\to\infty$, finite time extinction is accompanied by an even more striking phenomenon, the \textit{instantaneous shrinking of the support}, that is, the positivity set $\mathcal{P}(t) := \{ x\in \mathbb{R}^N\ :\ u(t,x)>0\}$ of $u$ at time $t$ is a relatively compact subset of $\mathbb{R}^N$ for all $t\in (0,T_e)$, even if $\mathcal{P}(0)=\mathbb{R}^N$ initially \cite{Ab98, BU94, EK79, Ka74}. Observe that the inequality $q<m$ is always satisfied when the diffusion is linear ($m=1$) or slow ($m>1$). Additional information on the behaviour of $\mathcal{P}(t)$ as $t\to T_e$ is also available when $m\ge 1$ and $N=1$ \cite{CMM95, FP16, FPFR12, FH87, GSV99, GSV00}.

Once finite time extinction is known to take place, gaining further insight into the underlying mechanism requires to identify the behaviour of $u(t)$ as $t\to T_e$, a preliminary step being to determine the relevant space and time scales. Simple scaling arguments predict that, for $r\in [1,\infty]$ and $u_0\in L^r(\mathbb{R}^N)$, there is a constant $\gamma_r>0$ (depending on $N$, $m$, $q$, $u_0$, and $r$) such that
\begin{equation}
\| u(t)\|_r \sim \gamma_r (T_e-t)^{\alpha - (N\beta/r)}\ , \label{a3}
\end{equation}
where
\begin{equation}
\alpha := \frac{1}{1-q}>0\ , \qquad \beta := \frac{q-m}{2(1-q)} \in \mathbb{R}\ . \label{a4}
\end{equation}
As already observed by several authors \cite{FH87, FV01, GV94a}, a rather simple comparison argument provides a lower bound for the $L^\infty$-norm of the form \eqref{a3}. Indeed, consider $t\in (0,T_e)$ and let $x(t)\in\mathbb{R}^N$ be a point where $u(t)$ reaches its maximum value, that is, $u(t,x(t))=\|u(t)\|_\infty$. Then $u(t)^m$ also attains its maximum value at this point, so that $\Delta u^m(t,x(t))\le 0$ and we infer from \eqref{a1a} that (at least formally)
$$
\frac{d}{dt}\|u(t)\|_\infty = \partial_t u(t,x(t)) \le - u(t,x(t))^q = - \|u(t)\|_\infty^q\ .
$$
Integrating the above differential inequality over $(t,T_e)$ gives the expected lower bound
\begin{equation}
\|u(t)\|_\infty \ge [(1-q) (T_e-t)]^{1/(1-q)}\ , \qquad t\in [0,T_e)\ . \label{a5}
\end{equation}
The derivation of an upper bound of the form \eqref{a3} turns out to be more involved and the results obtained so far are rather sparse: in one space dimension, the upper bound
\begin{equation}
\|u(t)\|_\infty \le C_\infty (T_e-t)^{1/(1-q)}\ , \qquad t\in [0,T_e)\ , \label{a6}
\end{equation}
is shown in \cite[Proposition~2.2]{HV92} for $m=1$ and in \cite[Lemma~5.2, Lemma~7.2 \& Lemma~9.2]{FV01} for $m\in (0,1)$, the latter being valid only for compactly supported initial data. The proofs are however of a completely different nature: in \cite{HV92}, properties of the linear heat equation are used while the approach in \cite{FV01} relies on the intersection-comparison technique, which requires in particular the compactness of the support of the initial condition. Still for $m=1$ but in any space dimension, the upper bound \eqref{a6} is derived in \cite[Lemma~2.1]{FH87} for radially symmetric initial data $u_0$ having a non-increasing profile and satisfying $\Delta u_0 + \mu u_0^q\ge 0$ in $\mathbb{R}^N$ for some $\mu>0$. The last case for which \eqref{a6} is proved corresponds to the choice $m=2-q>1$ and the proof relies on the derivation of an Aronson-B\'enilan estimate, which seems to be only available for this specific choice of the parameters $m$ and $q$ \cite{GV94a}.

The purpose of this note is to contribute to the validity of \eqref{a3} and derive optimal upper and lower bounds near the extinction time when the parameters $m$ and $q$ range in
\begin{equation}
\frac{(N-2)_+}{N}<m<q<1\ . \label{a7}
\end{equation}
Recalling that a lower bound in $L^\infty$ is already available, see \eqref{a5}, we begin with upper bounds.

%%%%%%%%%%%%%%%%
\begin{theorem}[Upper bounds]\label{Thma8}
Assume that $m$ and $q$ satisfy \eqref{a7} and consider a non-negative initial condition $u_0\in BC(\mathbb{R}^N)$, $u_0\not\equiv 0$, for which there is $\kappa_0>0$ such that
\begin{equation}
u_0(x) \le \kappa_0 |x|^{-2/(q-m)}\ , \qquad x\in\mathbb{R}^N\ . \label{a9}
\end{equation}
Given $r\in [1,\infty]$, there is $C_r>0$ depending only on $N$, $m$, $q$, $u_0$, and $r$ such that the solution $u$ to \eqref{a1a}-\eqref{a1b} satisfies
\begin{equation}
\|u(t)\|_r \le C_r (T_e-t)^{\alpha-(N\beta/r)}\ , \qquad t\in (0,T_e)\ , \label{a10}
\end{equation}
the extinction time $T_e$ being defined in \eqref{a1c}.
\end{theorem}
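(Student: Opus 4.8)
Throughout set $a:=2/(q-m)$, so that \eqref{a9} reads $u_0(x)\le\kappa_0|x|^{-a}$, and note that \eqref{a7} forces both $a>N$ and $am-N+2=\big(2m-(N-2)(q-m)\big)/(q-m)>0$. The plan is to single out the case $r=\infty$ of \eqref{a10}, i.e. the bound \eqref{a6}, as the core of the matter, and to complement it with a time-independent spatial decay estimate, namely
\[
u(t,x)\le K_1\,|x|^{-a}\ ,\qquad (t,x)\in(0,\infty)\times(\mathbb{R}^N\setminus\{0\})\ .
\]
I would obtain this by the comparison principle, since $x\mapsto K_1|x|^{-a}$ is a stationary supersolution of \eqref{a1a}: indeed $-\Delta(K_1|x|^{-a})^m+(K_1|x|^{-a})^q=\big[K_1^q-am(am-N+2)K_1^m\big]|x|^{-aq}\ge0$ as soon as $K_1^{q-m}\ge am(am-N+2)$, and taking $K_1:=\max\{\kappa_0,(am(am-N+2))^{1/(q-m)}\}$ also gives $K_1|x|^{-a}\ge u_0(x)$ via \eqref{a9}. (The comparison must be run with some care near $x=0$, where the supersolution is infinite, and as $|x|\to\infty$, where both functions vanish; this is standard for fast diffusion with absorption.)

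Granting \eqref{a6} momentarily, the estimate \eqref{a10} for $r\in[1,\infty)$ follows by splitting $\mathbb{R}^N$ at the radius $R(t):=(T_e-t)^{-\beta}$. On $B_{R(t)}$ one uses \eqref{a6}, whose contribution to $\|u(t)\|_r^r$ is at most $C_\infty^r(T_e-t)^{\alpha r}\,|B_{R(t)}|\le C(T_e-t)^{\alpha r-N\beta}$; on $\mathbb{R}^N\setminus B_{R(t)}$ one uses the spatial decay bound, whose contribution is at most $K_1^r\int_{|x|\ge R(t)}|x|^{-ar}\,dx\le C\,R(t)^{N-ar}=C(T_e-t)^{\alpha r-N\beta}$, the integral converging because $ar>N$. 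The two exponents agree since $\alpha=a\beta$, which is precisely why the exponent $2/(q-m)$ in \eqref{a9} is the right one; summing and taking the $r$-th root gives \eqref{a10}.

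It thus remains to establish \eqref{a6}. Assuming without loss of generality that $T_e>1$, I would pass to the self-similar variables $v(\tau,y):=(T_e-t)^{-\alpha}\,u\big(t,(T_e-t)^{-\beta}y\big)$ with $\tau:=-\log(T_e-t)\ge0$; then $v$ solves
\[
\partial_\tau v=\Delta v^m+\beta\,y\cdot\nabla v+\alpha v-v^q\ ,\qquad (\tau,y)\in(0,\infty)\times\mathbb{R}^N\ ,
\]
and the spatial decay bound turns into the time-uniform estimate $v(\tau,y)\le K_1|y|^{-a}$. In particular $v$ is bounded, uniformly in $\tau\ge0$, outside any fixed ball $B_\rho$, so \eqref{a6} is equivalent to a bound for $v$ on $B_\rho$ uniform in $\tau$. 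To get one I would compare $v$ on $B_\rho\times(0,\infty)$ with a stationary supersolution $W$ of the rescaled equation — a profile with $\Delta W^m+\beta\,y\cdot\nabla W+\alpha W-W^q\le0$ on $B_\rho$ — chosen so that $W\ge K_1\rho^{-a}$ on $\partial B_\rho$ and $W\ge\|u_0\|_\infty$ on $B_\rho$; the last inequality makes $W$ dominate $v(0,\cdot)=u(T_e-1,\cdot)$, since $\|u(T_e-1)\|_\infty\le\|u_0\|_\infty$. Since the reaction term $s\mapsto\alpha s-s^q$ splits into a linear and a non-increasing part, the comparison principle for the (singular) rescaled equation applies and gives $v(\tau,\cdot)\le W$ on $B_\rho$ for all $\tau\ge0$; together with $v(\tau,\cdot)\le K_1|y|^{-a}$ outside $B_\rho$ this yields $\|v(\tau)\|_\infty\le\max\{\|W\|_\infty,K_1\rho^{-a}\}$ for all $\tau\ge0$, which is \eqref{a6} on $[T_e-1,T_e)$; the range $t\in(0,T_e-1]$ is handled by $\|u(t)\|_\infty\le\|u_0\|_\infty\le\|u_0\|_\infty(T_e-t)^\alpha$.

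The hard part is the construction of the profile $W$: it must carry an arbitrarily large value at the origin (since a priori nothing controls $u(T_e-1,\cdot)$ near the origin better than by $\|u_0\|_\infty$) while matching the tail $K_1|y|^{-a}$ near $\partial B_\rho$. The obstruction is structural — for large values $\alpha W-W^q$ is a genuine source, so $W$ cannot be flat near the origin; instead $W^m$ must be concave enough there for $\Delta W^m$ to absorb the source (a cap of the form $W\simeq(B^m-cB|y|^2)^{1/m}$ on a small ball $\{|y|\lesssim B^{(m-1)/2}\}$), and this cap must then be glued to a slowly decreasing outer part reaching the level $K_1\rho^{-a}$; I expect this to be done by a shooting/phase-plane study of the profile ODE, or by such an explicit gluing, with $\rho$ taken large. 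Finally, I would note an alternative route to \eqref{a6}: from any $L^r$ bound ($r\in[1,\infty)$) of the form \eqref{a10}, the $L^r$–$L^\infty$ smoothing effect of the fast diffusion equation would give \eqref{a6}, the exponents matching since $-\tfrac{N}{N(m-1)+2r}+\tfrac{2r}{N(m-1)+2r}\big(\alpha-\tfrac{N\beta}{r}\big)=\alpha$; but deriving that $L^r$ bound runs into the same difficulty, since the straightforward energy and comparison arguments only reproduce the lower bound \eqref{a5}.
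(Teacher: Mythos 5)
Your first step (propagation of the decay $u(t,x)\le K_1|x|^{-2/(q-m)}$ via the stationary supersolution $\kappa|x|^{-2/(q-m)}$) is exactly the paper's Lemma~\ref{Lemb2}, and your splitting of $\mathbb{R}^N$ at $R(t)=(T_e-t)^{-\beta}$ correctly reduces \eqref{a10} for $r\in[1,\infty)$ to the case $r=\infty$ plus that decay estimate. But the case $r=\infty$ is the entire content of the theorem, and there your argument stops at a conjecture: you reduce \eqref{a6} to the existence of a stationary supersolution $W$ of the rescaled equation on a large ball, with $W\ge\|u_0\|_\infty$ inside and $W$ matching the small boundary value $K_1\rho^{-2/(q-m)}$, and you yourself flag that the construction of $W$ (``shooting/phase-plane study \dots or explicit gluing'') is the hard part and is not carried out. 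That construction is genuinely delicate in the fast diffusion range $m<1$: the concave cap $(B^m-cB|y|^2)^{1/m}$ you propose vanishes at $|y|^2\sim B^{m-1}/c$, which \emph{shrinks} as $B\to\infty$, so gluing a large central value to a small boundary value while keeping $\Delta W^m+\beta y\cdot\nabla W+\alpha W-W^q\le 0$ throughout is precisely the obstruction, and nothing in the proposal resolves it. As written, the proof has a gap at its central step.

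The idea you are missing is the one that lets the paper avoid any such barrier: integrate \eqref{a1a} over $(t,T_e)\times\mathbb{R}^N$ and use $u(T_e)\equiv 0$ to get the exact identity $\|u(t)\|_1=\int_t^{T_e}\int u(s,x)^q\,dx\,ds$; then the decay estimate of Lemma~\ref{Lemb2} yields, after optimizing the splitting radius, $\int u(s)^q\,dx\le C\|u(s)\|_1^{(N(m-q)+2q)/(N(m-q)+2)}$ with exponent in $(0,1)$, and the monotonicity of $s\mapsto\|u(s)\|_1$ gives $\|u(t)\|_1\le C(T_e-t)\|u(t)\|_1^{(N(m-q)+2q)/(N(m-q)+2)}$, hence the $L^1$ bound. (Your closing remark that ``the straightforward energy and comparison arguments only reproduce the lower bound \eqref{a5}'' is exactly where you went astray: this mass-balance argument is an energy-type argument and it produces the \emph{upper} bound in $L^1$.) The paper then bootstraps \emph{upwards}, from $L^1$ to $L^{r}$ and ultimately to $L^\infty$, by Moser iteration on the rescaled equation (Lemma~\ref{Lemb10}), i.e., multiplying \eqref{b7a} by $v^r$, using Gagliardo--Nirenberg and the uniform $L^1$ bound, and iterating; your proposed bootstrap runs in the opposite direction ($L^\infty$ down to $L^r$) and therefore cannot get started.
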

%%%%%%%%%%%%%%%%

Theorem~\ref{Thma8} thus extends the validity of the upper bound \eqref{a6} established in \cite{FV01} for $N=1$ and $r=\infty$ to any space dimension $N\ge 1$ and $r\in [1,\infty]$, while relaxing the assumption of compact support required in \cite{FV01}. It is worth mentioning that the validity of \eqref{a10} for $r\in [1,\infty)$ does not seem to be a simple consequence of \eqref{a10} for $r=\infty$ since $u(t)$ is positive everywhere in $\mathbb{R}^N$ for all $t\in (0,T_e)$ even if $u_0$ is compactly supported, see \cite[Lemma~2.5]{FV01} and Proposition~\ref{Propa15} below.

To be able to cope with higher space dimensions and non-compactly supported initial data, the proof of Theorem~\ref{Thma8} takes a different route from that in \cite{FV01} and is carried out in two steps: we first show that the algebraic decay at infinity \eqref{a9} enjoyed by $u_0$ remains true throughout time evolution and combine it with \eqref{a1a} to prove \eqref{a10} for $r=1$. We next use self-similar variables and Moser's interation technique to derive \eqref{a10} for all $r\in (1,\infty]$.

As a consequence of \eqref{a5} and Theorem~\ref{Thma8} for $r=\infty$, the correct time scale for the extinction phenomenon is identified. We now supplement the lower bound \eqref{a5} in $L^\infty$ with another one in $L^{m+1}$. On the one hand, it allows us to identify the right space scale. On the other hand, its derivation does not rely on the comparison principle but on energy estimates, a technique which is more likely to extend to other problems for which the former might not be available.

%%%%%%%%%%%%%%%%
\begin{theorem}[Lower bound in $L^{m+1}$]\label{Thma11}
Assume that $m$ and $q$ satisfy \eqref{a7} and consider a non-negative initial condition $u_0\in BC(\mathbb{R}^N)$, $u_0\not\equiv 0$, such that $u_0\in L^{m+1}(\mathbb{R}^N)$. There is $c_{m+1}>0$ depending only on $N$, $m$, $q$, and $u_0$ such that the solution $u$ to \eqref{a1a}-\eqref{a1b} satisfies
\begin{equation}
\|u(t)\|_{m+1} \ge c_{m+1} (T_e-t)^{\alpha-(N\beta/(m+1))}\ , \qquad t\in (0,T_e)\ , \label{a12}
\end{equation}
the extinction time $T_e$ being defined in \eqref{a1c}.
\end{theorem}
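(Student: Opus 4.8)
The plan is to test \eqref{a1a} against $u$ and against $u^q$ to obtain two energy identities, and then to close a differential inequality for the $L^{m+1}$-norm that forces it to stay above the claimed algebraic rate; otherwise the solution would have to become identically zero strictly before $T_e$, contradicting \eqref{a2}. More precisely, multiplying \eqref{a1a} by $u$ and integrating over $\RR^N$ gives
\begin{equation*}
\frac{1}{2}\frac{d}{dt}\|u(t)\|_2^2 + \frac{4m}{(m+1)^2}\bigl\| \nabla u^{(m+1)/2}(t) \bigr\|_2^2 + \|u(t)\|_{q+1}^{q+1} = 0\ ,
\end{equation*}
while multiplying by $u^q$ and integrating gives (after an integration by parts that produces a nonnegative gradient term with a favourable sign since $q>0$)
\begin{equation*}
\frac{1}{q+1}\frac{d}{dt}\|u(t)\|_{q+1}^{q+1} + \frac{4mq}{(m+q)^2}\bigl\| \nabla u^{(m+q)/2}(t) \bigr\|_2^2 + \|u(t)\|_{2q}^{2q} = 0\ .
\end{equation*}
The key quantity to monitor is the ``energy'' $E(t) := \|u(t)\|_{m+1}^{m+1}$, or possibly a linear combination of $\|u(t)\|_2^2$ with the Rayleigh-type dissipation term; the first identity already controls $\frac{d}{dt}\|u(t)\|_2^2$ from below by $-2\|u(t)\|_{q+1}^{q+1}$, and the point is to bound the right-hand side in terms of a power of $\|u(t)\|_{m+1}$ strictly larger than $1$, so that the resulting ODE inequality $-\frac{d}{dt}\|u(t)\|_{m+1}^{\theta} \le C$ integrates to $\|u(t)\|_{m+1}^{\theta} \ge \theta C (T_e-t)$ for the appropriate exponent $\theta = \theta(m,q,N)>0$.

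To carry this out, I would interpolate. The dissipation term $\|\nabla u^{(m+1)/2}\|_2^2$ controls, via Gagliardo--Nirenberg--Sobolev (this is exactly where the restriction $m > (N-2)_+/N$ in \eqref{a7} enters, guaranteeing the relevant Sobolev embedding and the finiteness of the exponents), an $L^p$-norm of $u^{(m+1)/2}$, hence a higher $L^s$-norm of $u$ with $s > m+1$; conversely, the absorption term $\|u\|_{q+1}^{q+1}$ with $q<1<m+1$ is a \emph{lower}-order norm. Combining the two energy identities with these interpolation inequalities should yield a closed differential inequality of the form
\begin{equation*}
\frac{d}{dt} F(t) \ge - C F(t)^{1 + \delta}\ ,\qquad \delta>0\ ,
\end{equation*}
for $F(t)$ an appropriate power of $\|u(t)\|_{m+1}$ (the exponent matching will be dictated by requiring scale invariance consistent with \eqref{a3}-\eqref{a4}), from which \eqref{a12} follows by integration over $(t,T_e)$ together with the fact that $F(T_e)=0$. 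The exponents $\alpha - N\beta/(m+1)$ will come out automatically by dimensional analysis once $\delta$ is determined. One also needs the a~priori upper bounds from Theorem~\ref{Thma8} (applicable since $u_0\in L^{m+1}$ and, after an arbitrarily small positive time, $u(t_0)$ satisfies the decay bound \eqref{a9} by the smoothing-type estimates for the fast diffusion equation) to make sure all the norms appearing are finite and the integrations by parts are justified; alternatively one works with the approximate/regularised problem and passes to the limit.

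The main obstacle I expect is \emph{closing} the differential inequality with the right sign and the right exponent: the absorption term $\|u\|_{q+1}^{q+1}$ and the diffusion dissipation term $\|\nabla u^{(m+1)/2}\|_2^2$ scale differently, and one must check that the worst-case combination still produces an exponent $1+\delta$ with $\delta>0$ strictly (rather than $\delta \le 0$, which would give nothing). This is precisely where the hypothesis $q>m$ should be used in an essential way — it makes $\beta>0$, so that the expected rate $(T_e-t)^{\alpha - N\beta/(m+1)}$ has an exponent that is genuinely that of an extinction profile and the interpolation inequalities can be balanced. A secondary technical point is handling the non-compact support: since $u(t)>0$ everywhere for $t\in(0,T_e)$ (Proposition~\ref{Propa15}), all integrals are over $\RR^N$ and one must ensure the tail decay from Theorem~\ref{Thma8} is strong enough for every norm and every gradient integral in the argument to converge, which it is under \eqref{a7} because $2/(q-m) > N$ is \emph{not} needed — the $L^{m+1}$ and $L^2$ integrability of the tail $|x|^{-2/(q-m)}$ only requires $2(m+1)/(q-m) > N$, automatically true in the admissible range, or is otherwise supplied by the finite-time smoothing estimates.
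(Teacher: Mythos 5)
Your overall toolkit (an energy identity, Gagliardo--Nirenberg interpolation, and an ODE inequality integrated up to $T_e$) is the right one, but the closing step as you describe it cannot work. The differential inequality you aim for, $\frac{d}{dt}F \ge -CF^{1+\delta}$ with $\delta>0$ and $F(T_e)=0$, is the wrong object: it integrates to $F(s)^{-\delta}\le F(t)^{-\delta}+\delta C(s-t)$, whose right-hand side stays bounded as $s\to T_e$ while the left-hand side must blow up, so such an inequality is incompatible with finite-time extinction and in any case yields no lower bound. (Your intermediate claim that ``$-\frac{d}{dt}\|u\|_{m+1}^{\theta}\le C$ integrates to $\|u(t)\|_{m+1}^{\theta}\ge \theta C(T_e-t)$'' has the same sign problem: that hypothesis gives the \emph{upper} bound $\|u(t)\|_{m+1}^{\theta}\le C(T_e-t)$.) What is actually needed, and what the paper proves, is the opposite structure: with $X:=\|u\|_{m+1}^{m+1}$ one shows $\frac{dX}{dt}+CX^{\gamma}\le 0$ for some $\gamma\in(0,1)$, i.e.\ the dissipation is bounded \emph{below} by a \emph{sublinear} power of $X$; then $(X^{1-\gamma})'\le -(1-\gamma)C$, and integrating over $(t,T_e)$ with $X(T_e)=0$ gives $X(t)^{1-\gamma}\ge (1-\gamma)C(T_e-t)$, which is \eqref{a12}. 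Heuristically, if the norm must decay at least like a sublinear power of itself, it has to be sizeable at time $t$ in order not to vanish strictly before $T_e$.

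A second, related issue is the choice of multipliers: testing with $u$ and $u^q$ produces identities for $\|u\|_2^2$ and $\|u\|_{q+1}^{q+1}$, not for the $L^{m+1}$-norm you want to estimate. The natural multiplier is $u^m$, which yields
\begin{equation*}
\frac{1}{m+1}\frac{d}{dt}\|u\|_{m+1}^{m+1}+\|\nabla u^m\|_2^2+\int_{\RR^N}u^{m+q}\,dx=0\ .
\end{equation*}
The paper then interpolates $\|u^m\|_{(m+1)/m}$ between $\|\nabla u^m\|_2$ and $\|u^m\|_{(m+q)/m}$ by Gagliardo--Nirenberg (this is where \eqref{a7} enters, through $1<(m+q)/m<(m+1)/m<2^*$), solves this for a lower bound on $\|\nabla u^m\|_2^2$ in terms of $X$ and $Y:=\int u^{m+q}\,dx$, and uses Young's inequality to absorb the resulting negative power of $Y$ against the absorption term $+Y$ in the identity, arriving at $X'+CX^{\gamma}\le 0$ with $\gamma=[m(N+2)-q(N-2)]/[m(N+2)-qN+2]\in(0,1)$ and $1/(1-\gamma)=(m+1)\alpha-N\beta$, so the claimed exponent comes out exactly. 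Note finally that neither the upper bounds of Theorem~\ref{Thma8} nor the decay hypothesis \eqref{a9} is needed here: the assumption $u_0\in L^{m+1}(\RR^N)$ suffices, and the argument never invokes the comparison principle.
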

%%%%%%%%%%%%%%%%

Observing that Theorems~\ref{Thma8} and~\ref{Thma11} are shown without using the $L^\infty$-lower bound \eqref{a5}, the latter may be recovered from these two results by H\"older's inequality, with a less explicit constant though.

%%%%%%%%%%%%%%%%
\begin{corollary}\label{Cora13}
Assume that $m$ and $q$ satisfy \eqref{a7} and consider a non-negative initial condition $u_0\in BC(\mathbb{R}^N)$, $u_0\not\equiv 0$, enjoying the decay property \eqref{a9}. For $r\in (m+1,\infty]$, there is $c_r>0$ depending only $N$, $m$, $q$, $u_0$, and $r$ such that the solution $u$ to \eqref{a1a}-\eqref{a1b} satisfies
\begin{equation}
\|u(t)\|_r \ge c_r (T_e-t)^{\alpha-(N\beta/r)}\ , \qquad t\in (0,T_e)\ . \label{a14}
\end{equation}
\end{corollary}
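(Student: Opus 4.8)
The plan is to derive Corollary~\ref{Cora13} by interpolating the lower bound \eqref{a12} in $L^{m+1}$ against the upper bound \eqref{a10} in $L^\infty$, which is legitimate here since the decay assumption \eqref{a9} gives us access to both Theorem~\ref{Thma11} and Theorem~\ref{Thma8} (the latter for \emph{all} $r\in[1,\infty]$, in particular for $r=\infty$). Fix $r\in(m+1,\infty]$ and $t\in(0,T_e)$. By H\"older's inequality, writing $u(t) = u(t)^{(m+1)/r} u(t)^{1-(m+1)/r}$ and estimating the second factor in $L^\infty$,
\begin{equation}
\|u(t)\|_{m+1}^{m+1} = \int_{\mathbb{R}^N} u(t,x)^{m+1}\,dx \le \|u(t)\|_\infty^{m+1-\theta}\, \|u(t)\|_r^{\theta}\ , \label{c1}
\end{equation}
where $\theta := r(m+1)/r = m+1$ if $r<\infty$... more carefully: for $r\in(m+1,\infty)$ one has the standard interpolation $\|u(t)\|_{m+1} \le \|u(t)\|_\infty^{1-(m+1)/r}\,\|u(t)\|_r^{(m+1)/r}$, and for $r=\infty$ the desired bound \eqref{a14} is just \eqref{a12} up to relabelling (since $\alpha - N\beta/(m+1) \ge \alpha$ when $\beta\le 0$... one must be slightly careful with signs, but the $r=\infty$ case reduces directly to \eqref{a5} or to \eqref{a12} combined with \eqref{a10}). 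So the substantive case is $r\in(m+1,\infty)$.

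For that case, I would rearrange the interpolation inequality to isolate $\|u(t)\|_r$: raising both sides of $\|u(t)\|_{m+1} \le \|u(t)\|_\infty^{1-(m+1)/r}\,\|u(t)\|_r^{(m+1)/r}$ to the power $r/(m+1)$ gives
\begin{equation}
\|u(t)\|_r \ge \|u(t)\|_{m+1}^{r/(m+1)}\, \|u(t)\|_\infty^{1-r/(m+1)}\ . \label{c2}
\end{equation}
Now insert the lower bound \eqref{a12}, $\|u(t)\|_{m+1} \ge c_{m+1}(T_e-t)^{\alpha - N\beta/(m+1)}$, and the upper bound \eqref{a10} with exponent $\infty$, namely $\|u(t)\|_\infty \le C_\infty (T_e-t)^{\alpha}$; note that in \eqref{c2} the power $1 - r/(m+1)$ is \emph{negative} since $r>m+1$, so the inequality $\|u(t)\|_\infty \le C_\infty(T_e-t)^\alpha$ flips direction when raised to that power, i.e. $\|u(t)\|_\infty^{1-r/(m+1)} \ge C_\infty^{1-r/(m+1)}(T_e-t)^{\alpha(1-r/(m+1))}$. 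Combining,
\begin{equation}
\|u(t)\|_r \ge c_{m+1}^{r/(m+1)}\, C_\infty^{1-r/(m+1)}\, (T_e-t)^{\frac{r}{m+1}\left(\alpha - \frac{N\beta}{m+1}\right) + \alpha\left(1 - \frac{r}{m+1}\right)}\ . \label{c3}
\end{equation}
It then remains to check that the exponent of $(T_e-t)$ equals exactly $\alpha - N\beta/r$; this is a one-line algebraic identity: $\frac{r}{m+1}\alpha + \alpha - \frac{r}{m+1}\alpha - \frac{r}{m+1}\cdot\frac{N\beta}{m+1} = \alpha - \frac{Nr\beta}{(m+1)^2}$, which does not immediately look like $\alpha - N\beta/r$, so I would need to recompute — the correct interpolation weight to use is the one that makes the scaling exponents match, and indeed the family of exponents $\alpha - N\beta/s$ is affine in $1/s$, so choosing the H\"older exponents so that $1/(m+1)$ is the appropriate convex combination of $1/r$ and $1/\infty = 0$ is exactly what forces the identity. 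Concretely, one writes $\frac{1}{m+1} = \frac{\lambda}{r} + (1-\lambda)\cdot 0$ with $\lambda = r/(m+1) > 1$... which is not a convex combination; instead the clean route is to interpolate $L^r$ between $L^{m+1}$ and $L^\infty$: $\|u(t)\|_r \le \|u(t)\|_{m+1}^{(m+1)/r}\,\|u(t)\|_\infty^{1-(m+1)/r}$ for $r\ge m+1$, which is an \emph{upper} bound on $\|u(t)\|_r$ — the wrong direction. So the genuinely correct manoeuvre is the one in \eqref{c2}: interpolate $L^{m+1}$ between $L^r$ and... no — between $L^1$? The point where I expect the only real care to be needed is precisely getting the interpolation set up in the direction that yields a \emph{lower} bound on $\|u(t)\|_r$ and simultaneously produces the exponent $\alpha - N\beta/r$; the honest way is $\|u(t)\|_{m+1} \le \|u(t)\|_r^{a}\,\|u(t)\|_\infty^{1-a}$ with $\frac{1}{m+1} = \frac{a}{r}$, i.e. $a = r/(m+1)\in(0,1)$ requires $r<m+1$, which fails.

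Given the above tension, the resolution — and the step I flag as the crux — is that Corollary~\ref{Cora13} must instead be obtained by combining \eqref{a12} with the $L^\infty$ \emph{upper} bound through the \emph{reverse} H\"older / interpolation $\|u(t)\|_{m+1}^{m+1} \le \|u(t)\|_r^{r}\,? $ — more precisely, for $r>m+1$, split the integral as $\int u^{m+1} = \int u^{(m+1)-\mu}\,u^{\mu}$ is not helpful, but $\|u\|_{m+1}^{m+1} \le \|u\|_\infty^{m+1-r\cdot 0}$... The actual clean identity the authors surely use: by H\"older with exponents $\tfrac{r}{m+1}$ and its conjugate applied to $u^{m+1} = u^{m+1}\cdot 1$ over the support — but $u(t)$ has full support. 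So the working tool is: for any $r>m+1$,
\begin{equation}
\|u(t)\|_{m+1} \le \|u(t)\|_\infty^{1-\frac{m+1}{r}\cdot\frac{r}{m+1}}\cdots
\end{equation}
— I will stop forcing a malformed line and state the plan cleanly instead: use $\|u(t)\|_{m+1} \le \|u(t)\|_{r}^{1-\eta}\|u(t)\|_{\infty}^{\eta}$? No. The legitimate inequality is
\begin{equation}
\|u(t)\|_{m+1} \le \|u(t)\|_\infty^{\frac{r-m-1}{r-1}\cdot\text{(something)}}\ ,
\end{equation}
so rather than guess, here is the plan in words: choose $p\in[1,m+1)$ and $\theta\in(0,1)$ with $\frac{1}{m+1} = \frac{\theta}{p} + \frac{1-\theta}{r}$ (possible since $p<m+1<r$), giving $\|u(t)\|_{m+1}\le \|u(t)\|_p^\theta\|u(t)\|_r^{1-\theta}$; then invoke \eqref{a10} at exponent $p$ (available under \eqref{a9}) to bound $\|u(t)\|_p$ from above, and \eqref{a12} to bound $\|u(t)\|_{m+1}$ from below, and solve for $\|u(t)\|_r$ — the negative power $1/(1-\theta)$ on an upper bound yields the lower bound \eqref{a14}, the time-exponents matching automatically because $\alpha - N\beta s^{-1}$ is affine in $s^{-1}$. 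Taking $p=1$ is the simplest admissible choice. \textbf{Main obstacle:} there is no real analytic obstacle — the entire content is bookkeeping the interpolation exponents and signs so that (i) one genuinely gets a lower bound for $\|u(t)\|_r$ and (ii) the resulting power of $(T_e-t)$ collapses to $\alpha - N\beta/r$; this is guaranteed once $r>m+1$ so that $1/r$ lies in the interval between $1/(m+1)$ and $0$, and then \eqref{a14} follows, with $c_r$ an explicit product of powers of $c_{m+1}$ and $C_p$.
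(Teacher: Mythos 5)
Your final plan --- interpolate $\|u(t)\|_{m+1}\le \|u(t)\|_1^{\theta}\|u(t)\|_r^{1-\theta}$ with $\tfrac{1}{m+1}=\theta+\tfrac{1-\theta}{r}$, bound $\|u(t)\|_1$ from above by Theorem~\ref{Thma8} and $\|u(t)\|_{m+1}$ from below by Theorem~\ref{Thma11}, and solve for $\|u(t)\|_r$ --- is exactly the paper's proof (which uses the exponents $\tfrac{rm}{r-1}$ on $\|u(t)\|_r$ and $\tfrac{r-1-m}{r-1}$ on $\|u(t)\|_1$, the resulting power of $T_e-t$ collapsing to $\alpha-N\beta/r$ precisely because, as you observe, $\alpha-N\beta/s$ is affine in $1/s$), and this single inequality also covers $r=\infty$ without a separate argument. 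The only issue is presentational: the first two thirds of your text consists of abandoned false starts (interpolating $L^{m+1}$ between $L^r$ and $L^\infty$ cannot work since $1/(m+1)\notin[0,1/r]$, as you eventually note), and a clean write-up should retain only the last paragraph together with the explicit verification of the exponent identity.
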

%%%%%%%%%%%%%%%%

Summarizing the outcome of Theorem~\ref{Thma8}, Theorem~\ref{Thma11}, and Corollary~\ref{Cora13}, we have shown that, for all non-negative initial data $u_0\in BC(\mathbb{R}^N)$, $u_0\not\equiv 0$, enjoying the decay property \eqref{a9}, the corresponding solution $u$ to \eqref{a1a}-\eqref{a1b} is bounded in $L^r(\mathbb{R}^N)$, $r\in [m+1,\infty]$, from above and from below at time $t\in (0,T_e)$ by the same power of $T_e-t$. Such estimates pave the way towards a more precise description of the behaviour of $u(t)$ as $t\to T_e$, which is expected to be self-similar. That this is indeed the case is shown in \cite{FGV02, FV01} in one space dimension, another building block of the proof being the uniqueness of self-similar solutions \cite{FGV02}.

We end up this note with the already mentioned everywhere positivity of solutions to \eqref{a1a}-\eqref{a1b} for positive times prior to the extinction time. As we shall see below, this property holds true for a wider range of the parameters $m$ and $q$, namely $0 \tr{<} m \le q <1$. It is already observed in \cite[Lemma~2.5]{FV01} in one space dimension and we extend it herein to any space dimension. It is worth emphasizing that it includes the case $q=m$ and contrasts markedly with the instantaneous shrinking of the support occurring when $q<m$.

%%%%%%%%%%%%%%%%
\begin{proposition}[Everywhere positivity]\label{Propa15}
Consider $0< m \le q <1$. Let $u_0\in  BC(\mathbb{R}^N)$ be a non-negative initial condition, $u_0\not\equiv 0$, and denote the corresponding solution to \eqref{a1a}-\eqref{a1b} by $u$ with extinction time $T_e$. For $t\in (0,T_e)$, there holds
\begin{equation}
\mathcal{P}(t) := \{ x\in \mathbb{R}^N\ :\ u(t,x)>0\} = \mathbb{R}^N\ . \label{a16}
\end{equation}
\end{proposition}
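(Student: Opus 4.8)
The plan is to establish \eqref{a16} by a local-in-space comparison argument, exploiting the fact that when $q \ge m$ the absorption term is weak enough (near the level sets where $u$ is small) that it cannot destroy positivity instantaneously. Fix $t_0 \in (0,T_e)$. Since $u(t_0) \not\equiv 0$ by \eqref{a2}, there is a ball $B(x_0,\rho)$ on which $u(t_0) \ge \delta > 0$ for some $\delta, \rho > 0$ (using the continuity of $u(t_0)$). By the strong maximum principle or known positivity-propagation results for the porous medium/fast diffusion equation, the positivity set $\mathcal{P}(t)$ is non-decreasing in $t$ and, for $t > t_0$, contains a ball whose radius is a priori finite. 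The real task is to show this ball exhausts all of $\mathbb{R}^N$ for every $t \in (t_0, T_e)$, i.e. that positivity propagates with infinite speed.

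The key step is to construct an explicit (sub)solution that spreads over the whole space. I would look for a subsolution of the form $\underline{u}(t,x) = \tau(t)^{-a} F\big(|x| \tau(t)^{-b}\big)$ or, more simply, a subsolution built from a positive, slowly-decaying profile such as $w(x) = \varepsilon (1+|x|^2)^{-s}$ for a suitable exponent $s > 0$ and small $\varepsilon > 0$: one computes $-\Delta w^m$ and checks that, for $s$ chosen so that $w^m$ has the right decay (roughly $s = 1/(1-m)$ so that $\Delta w^m$ decays like $w^q$ or slower — here the condition $q \ge m$ is exactly what makes $w^q \le C w^m \le$ the available diffusion at infinity after rescaling), the stationary inequality $-\Delta w^m + w^q \le 0$ holds outside a compact set, and then correct near the origin. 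Coupling $w$ with an appropriate time factor $\eta(t)$ solving $\eta' = -\eta^q$ (or a sub-version thereof) yields a genuine subsolution $\underline{u}(t,x) = \eta(t) w(x)$ on $(t_0, T_\ast) \times \mathbb{R}^N$ for some $T_\ast > t_0$, provided $\eta(t_0) w \le u(t_0)$ on the support where $w$ is not yet dominated, which is arranged by choosing $\varepsilon$ small. Comparison then gives $u(t,x) \ge \underline{u}(t,x) > 0$ for all $x \in \mathbb{R}^N$ and $t \in (t_0, T_\ast)$. Iterating this construction starting from any $t_1 \in (t_0, T_\ast)$ — noting that at $t_1$ we now have $u(t_1)$ bounded below by a positive profile decaying like $w$, which is enough to restart — covers the full interval $(t_0, T_e)$, and since $t_0$ was arbitrary we obtain \eqref{a16}.

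The main obstacle is the construction of the spatial profile $w$ and the verification that $\eta(t) w$ is a subsolution uniformly up to $|x| = \infty$: one must balance the decay rate of $w$ against the competing effects of $-\Delta w^m$ (which for fast diffusion, $m<1$, is delicate because $w^m$ decays more slowly than $w$) and the absorption $w^q$. The borderline nature of the fast-diffusion Laplacian means a pure power profile may not close the inequality, and one may need a logarithmic correction or to work with the profile $(1+|x|^2)^{-s}$ with $s$ slightly adjusted, or to first reduce to the case where $u(t_0)$ dominates a radial decreasing function and use the comparison with a sub-Barenblatt-type profile for the pure fast diffusion equation (which has the infinite-speed-of-propagation property built in when $(N-2)_+/N < m < 1$). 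A secondary technical point is justifying the comparison principle in the unbounded setting with merely bounded data, which is standard but should be invoked carefully; alternatively one can localize by comparison on large balls with zero boundary data and pass to the limit. Handling the endpoint case $q = m$ requires no extra work since the subsolution estimate $w^q \le C w^m$ on the relevant region is then trivial.
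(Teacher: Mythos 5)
Your strategy is genuinely different from the paper's (which proves the Bénilan--Crandall type bound $\partial_t u(t)\le u(t)/((1-m)t)$ by a scaling comparison and then applies the \emph{elliptic} strong maximum principle to the inequality $-\Delta u^m(t)+d\,u^m(t)\ge 0$ with $d=u^{q-m}+u^{1-m}/((1-m)t)$ bounded precisely because $m\le q<1$), but as written it has a structural gap that is more than a technical detail left to the reader. The comparison you invoke requires the ordering $\eta(t_0)w\le u(t_0)$ on \emph{all} of $\mathbb{R}^N$ at the initial time $t_0$. Your profile $w(x)=\varepsilon(1+|x|^2)^{-s}$ is strictly positive everywhere, whereas positivity of $u(t_0)$ outside the ball $B(x_0,\rho)$ is exactly the statement to be proved; if $u(t_0,x_1)=0$ at some $x_1$, no choice of small $\varepsilon$ restores the ordering there. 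The argument is therefore circular unless the subsolution is arranged to vanish at $t=t_0$ outside the known positivity set and to become positive only for $t>t_0$ --- i.e.\ the subsolution must itself carry the infinite speed of propagation, which is a genuinely harder construction than a separated profile $\eta(t)w(x)$. Your fallback (sub-Barenblatt profiles of the pure fast diffusion equation) would address this, but only for $(N-2)_+/N<m<1$, while the proposition is asserted for all $0<m\le q<1$, and one would still have to check that such a profile remains a subsolution in the presence of the absorption term.

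The second concrete problem is the endpoint $q=m$, which you dismiss as requiring ``no extra work.'' For any power-type profile $w\sim|x|^{-2s}$ one has $w^q=w^m\sim|x|^{-2sm}$ while $\Delta w^m\sim|x|^{-2sm-2}$ and $|\eta'|\,w\sim|x|^{-2s}$ with $2s>2sm$; hence at infinity the absorption term strictly dominates both the diffusion and the time-derivative contributions, and the subsolution inequality $\eta'w-\eta^m\Delta w^m+\eta^qw^q\le 0$ cannot hold outside any compact set. The relevant comparison is between $w^q$ and $\Delta w^m$ (which loses two powers of $|x|$), not between $w^q$ and $w^m$. This is precisely why the paper avoids explicit barriers altogether: the hypothesis $m\le q$ enters only through the boundedness of the zeroth-order coefficient $u^{q-m}$, which makes the strong maximum principle applicable uniformly in $q\in[m,1)$, including $q=m$. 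I would recommend either adopting that route or, if you want a barrier argument, restricting to $q>m$ and building a subsolution that is compactly supported at the initial time.
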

%%%%%%%%%%%%%%%%
Before proving the results stated above, we point out once more that the energy techniques developed herein seem to be rather flexible and are expected to have a wider range of applicability. For instance, a related approach is used in the companion paper \cite{IL17}, where optimal (lower and upper) bounds near the extinction time are established for a different fast diffusion equation (featuring the $p$-Laplacian operator, $p\in (1,2)$) with a gradient absorption term.

%%%%%%%%%%%%%%%%
%%%%%%%%%%%%%%%%
\section{Upper bounds near the extinction time}
%%%%%%%%%%%%%%%%
%%%%%%%%%%%%%%%%

Throughout this section, we assume that $m$ and $q$ satisfy \eqref{a7} and consider a non-negative initial condition $u_0\in BC(\mathbb{R}^N)$, $u_0\not\equiv 0$, enjoying the decay property \eqref{a9}. Let $u$ be the corresponding solution to \eqref{a1a}-\eqref{a1b}.

%%%%%%%%%%%%%%%%
%%%%%%%%%%%%%%%%
\subsection{$L^1$-estimate}
%%%%%%%%%%%%%%%%
%%%%%%%%%%%%%%%%

We begin with the propagation throughout time evolution of the algebraic decay \eqref{a9} and set
\begin{equation}
\kappa_* := \left( \frac{2m(m+q)}{(q-m)^2} \right)^{1/(q-m)}\ . \label{b1}
\end{equation}

%%%%%%%%%%%%%%%%
\begin{lemma}\label{Lemb2} For $t\in [0,\infty)$ and $x\in\mathbb{R}^N\setminus\{0\}$, there holds
$$
u(t,x) \le \max\{\kappa_0 , \kappa_*\} |x|^{-2/(q-m)}\ .
$$
\end{lemma}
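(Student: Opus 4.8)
The plan is to construct a time-independent supersolution of the form $U(x) = A|x|^{-2/(q-m)}$ for a suitable constant $A>0$ and to compare it with $u$ via the comparison principle. The exponent $2/(q-m)$ is precisely the self-similar exponent singled out by the balance between the diffusion term $\Delta u^m$ and the absorption term $u^q$ under the scaling leaving \eqref{a1a} invariant, so $U$ is the natural candidate for a stationary barrier.

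First I would compute, for $x\neq 0$, the quantity $-\Delta U^m + U^q$. Since $U^m(x) = A^m |x|^{-2m/(q-m)}$ and $\Delta |x|^{-\sigma} = \sigma(\sigma+2-N)|x|^{-\sigma-2}$, choosing $\sigma = 2m/(q-m)$ gives $\Delta U^m = A^m \sigma(\sigma + 2 - N)|x|^{-\sigma - 2}$, with $\sigma + 2 = 2q/(q-m)$, so that $-\Delta U^m$ is a constant multiple of $|x|^{-2q/(q-m)} = U^q/A^q$. Hence
\begin{equation*}
-\Delta U^m + U^q = \left[ A^q - A^m \sigma\left(\sigma + 2 - N\right) \right] |x|^{-2q/(q-m)}\ .
\end{equation*}
For this to be non-negative we need $A^{q-m} \ge \sigma(\sigma+2-N)$. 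Now $\sigma(\sigma+2-N) \le \sigma(\sigma+2) = \dfrac{2m}{q-m}\cdot\dfrac{2q}{q-m} = \dfrac{4mq}{(q-m)^2}$; but the definition \eqref{b1} has $\kappa_*^{q-m} = \dfrac{2m(m+q)}{(q-m)^2}$, which is smaller, so one must keep the sharper estimate $\sigma(\sigma+2-N) \le \sigma(\sigma + 2) - \sigma N$ and use $m > (N-2)_+/N$, equivalently $N < 2m/(q-m) + 2 \cdot\dots$; more precisely condition \eqref{a7} gives $Nm > N-2$, hence $N(q-m) < q-m + Nm - (N-2)m = \dots$, so that $\sigma + 2 - N = \dfrac{2q - N(q-m)}{q-m} \le \dfrac{2q + 2m - 0}{\,}\cdot\dots$; carefully, $N(q-m) \le N q - (N-2) = \dots$ combined with $q<1$ yields $\sigma + 2 - N \le \dfrac{2(q+m)}{q-m}\cdot\dfrac{1}{2}\cdot\dots$ — the upshot being $\sigma(\sigma+2-N) \le \dfrac{2m(m+q)}{(q-m)^2} = \kappa_*^{q-m}$, so that $A = \kappa_*$ makes $U$ a stationary supersolution of \eqref{a1a} away from the origin.

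With this computation in hand, the remaining steps are: (i) set $A := \max\{\kappa_0,\kappa_*\}$ so that, by monotonicity in $A$ of the bracket above, $U = A|x|^{-2/(q-m)}$ is still a supersolution on $\mathbb{R}^N\setminus\{0\}$ and, by \eqref{a9}, $u_0 \le U$ on $\mathbb{R}^N$; (ii) observe that $U$ is a supersolution \emph{including through the origin} in the weak/viscosity sense because $U$ has a singularity there (it blows up like $|x|^{-2/(q-m)}$, which is a local supersolution since no mass can enter from a polar singular set — alternatively one works on $\mathbb{R}^N\setminus \overline{B(0,\rho)}$ and lets $\rho\to 0$, using that $u$ is bounded hence $u \le U$ near the origin trivially); (iii) apply the comparison principle on $(0,\infty)\times(\mathbb{R}^N\setminus\overline{B(0,\rho)})$ with boundary data $u \le \|u_0\|_\infty \le U$ on $\partial B(0,\rho)$ for $\rho$ small enough (here one uses boundedness of $u$ together with $U(x)\to\infty$ as $|x|\to 0$), and then let $\rho\to 0$ to conclude $u(t,x)\le U(x)$ for all $t\ge 0$ and $x\neq 0$.

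I expect the main obstacle to be the rigorous justification of step (iii) — namely handling the singularity of the barrier at the origin within the framework of weak solutions — rather than the algebraic supersolution computation, which is routine once the exponents are matched. One must either invoke a comparison principle on exterior domains $\{|x|>\rho\}$ and pass to the limit $\rho\to 0$ (using $\|u\|_\infty < \infty$ to control $u$ on $\{|x|=\rho\}$ against the blowing-up barrier), or appeal to the fact that an isolated point is a removable set for bounded supersolutions of this fast-diffusion equation. A secondary subtlety is verifying that condition \eqref{a7}, precisely $m > (N-2)_+/N$, is exactly what is needed to keep $\sigma(\sigma+2-N)$ below $\kappa_*^{q-m}$; this requires being slightly careful with the case $N=1,2$ (where $(N-2)_+ = 0$ and the condition is vacuous) versus $N\ge 3$.
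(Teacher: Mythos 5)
Your proposal is correct and follows essentially the same route as the paper: the stationary barrier $A|x|^{-2/(q-m)}$ with $A\ge\kappa_*$ is exactly the paper's supersolution $\Sigma_\kappa$, and the comparison principle (which the paper invokes without dwelling on the singularity at the origin that you rightly flag) finishes the argument. One simplification worth noting: with $\sigma=2m/(q-m)$ one has $\sigma(\sigma+2-N)=\sigma(\sigma+1)-\sigma(N-1)\le\sigma(\sigma+1)=2m(m+q)/(q-m)^2=\kappa_*^{q-m}$ simply because $N\ge1$ and $q>m$, so condition \eqref{a7} plays no role in this step and your tangled middle computation can be replaced by the one-line observation that the $-\sigma(N-1)$ term has a favourable sign.
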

%%%%%%%%%%%%%%%%

\begin{proof}
Set $\Sigma_\kappa(x) := \kappa |x|^{-2/(q-m)}$ for $x\in\mathbb{R}^N\setminus\{0\}$, where $\kappa$ is a positive constant yet to be determined. We note that
\begin{align*}
-\Delta \Sigma^m(x) + \Sigma(x)^q & = - \kappa^m \left[ \frac{2m(m+q)}{(q-m)^2} |x|^{-2q/(q-m)} - \frac{2m(N-1)}{(q-m)} |x|^{-2q/(q-m)} \right] \\
& \quad + \kappa^q |x|^{-2q/(q-m)} \\
& \ge \kappa^m \left( \kappa^{q-m} - \kappa_*^{q-m} \right) |x|^{-2q/(q-m)}
\end{align*}
for $x\in \mathbb{R}^N\setminus\{0\}$, so that $\Sigma_\kappa$ is a supersolution to \eqref{a1a} in $\mathbb{R}^N\setminus\{0\}$ for all $\kappa\ge \kappa_*$. We then choose $\kappa = \max\{\kappa_0 , \kappa_*\}$ and use the comparison principle to complete the proof of Lemma~\ref{Lemb2}.
\end{proof}

We are now in a position to derive the claimed upper bound near the extinction time for $r=1$.

\begin{proof}[Proof of Theorem~\ref{Thma8}: $r=1$]
Let $t\in [0,T_e)$. Integrating \eqref{a1a} over $(t,T_e)\times\mathbb{R}^N$ gives
\begin{equation}
\|u(t)\|_1 = \int_t^{T_e} \int_{\mathbb{R}^N} u(s,x)^q\ dxds\ . \label{b3}
\end{equation}
Owing to \eqref{a7}, there holds $2q/(q-m)>N$ and we infer from Lemma~\ref{Lemb2} and H\"older's inequality that, for $s\in (t,T_e)$ and $R>0$,
\begin{align*}
\int_{\mathbb{R}^N} u(s,x)^q\ dx & = \int_{B(0,R)} u(s,x)^q\ dx + \int_{\mathbb{R}^N\setminus B(0,R)} u(s,x)^q\ dx \\
& \le C R^{N(1-q)} \|u(s)\|_1^q + (\max\{\kappa_0,\kappa_*\})^q |\mathbb{S}^{N-1}| \int_R^\infty r^{N-1-(2q/(q-m))}\ dr \\
& \le C \left( R^{N(1-q)} \|u(s)\|_1^q + R^{(N(q-m)-2q)/(q-m)} \right)\ .
\end{align*}
We next optimize in $R$ in the previous inequality by setting $R(s) := \|u(s)\|_1^{-(q-m)/(N(m-q)+2)}$, which satisfies
$$
R(s)^{N(1-q)} \|u(s)\|_1^q = R(s)^{(N(q-m)-2q)/(q-m)} = \|u(s)\|_1^{(N(m-q)+2q)/(N(m-q)+2)}\ .
$$
Consequently, taking $R=R(s)$ in the previous inequality, we obtain
$$
\int_{\mathbb{R}^N} u(s,x)^q\ dx \le C \|u(s)\|_1^{(N(m-q)+2q)/(N(m-q)+2)}\ ,
$$
which gives, together with \eqref{b3}, the positivity of $N(m-q)+2q$, and the time monotonicity of $s\mapsto \|u(s)\|_1$,
\begin{align*}
\|u(t)\|_1 & \le C \int_t^{T_e} \|u(s)\|_1^{(N(m-q)+2q)/(N(m-q)+2)}\ ds \\
& \le C (T_e-t) \|u(t)\|_1^{(N(m-q)+2q)/(N(m-q)+2)}\ ,
\end{align*}
from which \eqref{a10} for $r=1$ readily follows.
\end{proof}

%%%%%%%%%%%%%%%%
%%%%%%%%%%%%%%%%
\subsection{Scaling variables and $L^r$-estimates, $r\in (1,\infty]$}
%%%%%%%%%%%%%%%%
%%%%%%%%%%%%%%%%

The next step is to take advantage of the just derived $L^1$-upper bound to derive the corresponding ones in $L^r$ for $r\in (1,\infty]$. To this end, we introduce the scaling variables
\begin{equation}
s := \ln\left( \frac{T_e}{T_e-t} \right)\ , \qquad y := x (T_e-t)^\beta\ , \qquad (t,x)\in [0,T_e)\times \mathbb{R}^N\ , \label{b4}
\end{equation}
and the new unknown function $v$ defined by
\begin{equation}
u(t,x) = (T_e-t)^\alpha v\left( \ln(T_e) - \ln(T_e-t) , x (T_e-t)^\beta \right)\ , \qquad (t,x)\in [0,T_e)\times \mathbb{R}^N\ , \label{b5}
\end{equation}
or, equivalently,
\begin{equation}
v(s,y) = T_e^{-\alpha} e^{\alpha s} u\left( T_e (1-e^{-s}) , y T_e^{-\beta} e^{\beta s} \right)\ , \qquad (s,y)\in [0,\infty)\times \mathbb{R}^N\ . \label{b6}
\end{equation}
It readily follows from \eqref{a1a}-\eqref{a1b} that $v$ solves
\begin{align}
\partial_s v(s,y) & = \alpha v(s,y) + \beta y\cdot \nabla v(s,y) + \Delta v^m(s,y) - v(s,y)^q\ , \qquad (s,y)\in (0,\infty)\times \mathbb{R}^N\ , \label{b7a} \\
v(0,y) & = v_0(y) := T_e^{-\alpha} u_0\left( y T_e^{-\beta} \right)\ , \qquad y\in \mathbb{R}^N\ . \label{b7b}
\end{align}
Since
\begin{equation}
\|u(t)\|_r = (T_e-t)^{\alpha-(N\beta/r)} \|v(s)\|_r\ , \qquad t\in (0,T_e)\ , \label{b8}
\end{equation}
for all $r\in [1,\infty]$, we realize that an upper bound such as \eqref{a10} on $\|u(t)\|_r$ for $t\in (0,T_e)$ obviously follows from a uniform upper bound on $\|v(s)\|_r$ for $s\ge 0$, the converse being true as well. In particular, it follows from \eqref{b8} and Theorem~\ref{Thma8} for $r=1$ that
\begin{equation}
\|v(s)\|_1 \le C_1\ , \qquad s\ge 0\ , \label{b9}
\end{equation}
and we may assume without loss of generality that $C_1\ge 1$.

We now aim at using a bootstrap argument to deduce from \eqref{b7a} and \eqref{b9} that $v$ belongs to $L^\infty(0,\infty;L^r(\mathbb{R}^N))$ for all $r\in (1,\infty]$. To this end, Moser's iteration technique is a suitable tool and the way we apply it is inspired from \cite[Theorem~3.1]{Al79}. But since \cite[Theorem~3.1]{Al79} is devoted to the slow diffusion case $m>1$, some technical aspects of its proof do not seem to apply directly here and we borrow additional arguments from the proof of \cite[Proposition~2]{AMST98}.

%%%%%%%%%%%%%%%%
\begin{lemma}\label{Lemb10}
Let $r\in (0,\infty]$. There is $C_{r+1}>0$ depending only on $N$, $m$, $q$, $u_0$, and $r$ such that
$$
\|v(s)\|_{r+1} \le C_{r+1}\ , \qquad s\ge 0\ .
$$
\end{lemma}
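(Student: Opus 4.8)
The plan is to run Moser's iteration on the rescaled equation \eqref{b7a}, starting from the $L^1$-bound \eqref{b9} and bootstrapping through an increasing sequence of exponents $p_n \to \infty$. The key point is that, because $m < q < 1$, the absorption term $-v^q$ in \eqref{b7a} has a \emph{sign that helps}: when we multiply by $v^{p-1}$ (for $p > 1$) and integrate, the absorption contributes a nonnegative term $\int v^{p-1+q}$ on the favourable side, while the transport term $\beta y\cdot\nabla v$ integrates (after the multiplication) to a multiple of $\int v^p$ by the divergence theorem. Thus the differential inequality for $\|v(s)\|_p^p$ takes the schematic form
\begin{equation*}
\frac{1}{p}\frac{d}{ds}\|v(s)\|_p^p + \frac{4m(p-1)}{(p+m-1)^2}\left\| \nabla v^{(p+m-1)/2}(s) \right\|_2^2 \le \left( \alpha - \frac{N\beta}{p}\cdot\frac{\text{(sign factor)}}{1} \right) \|v(s)\|_p^p,
\end{equation*}
where the right-hand side coefficient stays controlled because $\beta<0$ (as $m<q$). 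The diffusion term, rewritten via $\nabla v^m \cdot \nabla v^{p-1}$, produces the standard gradient term for $w := v^{(p+m-1)/2}$, and this is where the restriction $m > (N-2)_+/N$ enters: it guarantees that the Gagliardo--Nirenberg--Sobolev embedding can be applied to interpolate $\|w\|_{2\cdot}$ against $\|\nabla w\|_2$ and a lower $L^\sigma$-norm of $w$.

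First I would fix $r\in(0,\infty)$ (the case $r=\infty$ following by letting the iteration exponent tend to infinity and using the uniformity of the bounds) and choose an increasing sequence $p_n$ with $p_0 = 1$, $p_{n+1} = \theta p_n + (1-m)$ or similar, arranged so that $p_n\to\infty$; the precise recursion is dictated by the Gagliardo--Nirenberg exponents in dimension $N$. Next, for each $n$, I would multiply \eqref{b7a} by $(v^{p_n})$-type test functions (more precisely $v^{p_n-1}$, with a truncation $v\wedge k$ and $k\to\infty$ to justify the computation rigorously, as in \cite{AMST98}), integrate over $\mathbb{R}^N$, handle the transport term by integration by parts, discard the favourable absorption term, and apply Gagliardo--Nirenberg--Sobolev to the gradient term to obtain, after using Young's inequality, a closed differential inequality of the form
\begin{equation*}
\frac{d}{ds} Y_n(s) \le -\delta_n Y_n(s)^{1+\lambda_n} + K_n\left( 1 + \sup_{s\ge 0} Y_{n-1}(s) \right)^{\mu_n},
\end{equation*}
where $Y_n(s) := \|v(s)\|_{p_n}^{p_n}$, and $\delta_n,\lambda_n,K_n,\mu_n>0$ depend explicitly on $N,m,q,p_n$. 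Such an inequality, of the form $Y' \le -\delta Y^{1+\lambda} + A$, forces $\sup_{s\ge 0} Y_n(s) \le \max\{ Y_n(0), (A/\delta)^{1/(1+\lambda)} \}$, and since $v_0 = T_e^{-\alpha}u_0(\cdot T_e^{-\beta})$ is bounded (as $u_0\in BC(\mathbb{R}^N)$), the initial value $Y_n(0)$ is finite; this yields a bound on $\sup_s \|v(s)\|_{p_n}$ in terms of $\sup_s\|v(s)\|_{p_{n-1}}$. Iterating from \eqref{b9} gives the claimed bound $\|v(s)\|_{r+1}\le C_{r+1}$ for every finite $r$, with $C_{r+1}$ depending only on $N,m,q,u_0,r$.

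The main obstacle, and the place where care is genuinely needed, is controlling how the constants $\delta_n, K_n, \mu_n$ and the exponents $\lambda_n$ behave along the iteration when one wants the final $L^\infty$-bound (the $r=\infty$ case): one must check that the resulting sequence of bounds $M_n := \sup_s \|v(s)\|_{p_n}$ does not blow up as $n\to\infty$, which requires the standard but delicate bookkeeping of the Moser scheme — verifying that $\sum_n (\log\text{-corrections})$ converges. A secondary technical point is the rigorous justification of the energy identity for the degenerate/singular diffusion $v^m$ with $m\in(0,1)$: the test function $v^{p-1}$ is not admissible as such when $p<1$ or near the zero set of $v$, so one works with $(v\wedge k)^{p-1}$ or $(v+\eta)^{p-1}-\eta^{p-1}$, derives the inequality with these regularised test functions, and passes to the limit — this is precisely the extra ingredient borrowed from \cite[Proposition~2]{AMST98} that \cite[Theorem~3.1]{Al79} (written for $m>1$) does not supply directly. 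Once these two points are handled, the rest is the routine Moser machinery.
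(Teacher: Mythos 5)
Your overall strategy---energy estimates for $\|v\|_{p}^{p}$ on the rescaled equation \eqref{b7a}, Gagliardo--Nirenberg against the $L^1$-bound \eqref{b9}, and Moser iteration---is exactly the route the paper takes, and the technical remarks about regularised test functions and the provenance of the argument (\cite{Al79} adapted via \cite{AMST98}) match the paper's own comments. Two points, however, deserve attention. First, a factual slip: since $m<q<1$, the similarity exponent $\beta=(q-m)/2(1-q)$ is \emph{positive}, not negative as you assert; this is harmless (the transport term contributes $(\alpha-N\beta/(r+1))\|v\|_{r+1}^{r+1}$, which is then simply bounded by $\alpha(r+1)\|v\|_{r+1}^{r+1}$ after multiplying through), but the justification you give for controlling that coefficient is based on the wrong sign. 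Second, and more substantively, the case $r=\infty$ is the heart of the lemma, and your proposal only \emph{names} the difficulty (``delicate bookkeeping of the Moser scheme'') without resolving it. The paper's resolution is concrete: after the energy inequality \eqref{b11} and the interpolation steps, one reaches a recursion of the form $V_{j+1}\le K_0(1+r_{j+1})^{\sigma_0}\max\{K_1^{1+r_{j+1}},V_j^2\}$ for $V_j=\sup_s\|v(s)\|_{r_j+1}^{r_j+1}$ with $1+r_{j+1}=2(1+r_j)-(1-m)(\sigma_0-1)$, and the iteration closes (via Lemma~\ref{Lemb100}) precisely because $\sigma_0-1<1/(1-m)$, which is equivalent to the auxiliary Gagliardo--Nirenberg exponent satisfying $\zeta>2/m$; the nonemptiness of $(2/m,2^*)$ is where the hypothesis $m>(N-2)_+/N$ enters. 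Without exhibiting such a recursion and verifying this compatibility condition, the $L^\infty$ conclusion is not established. A smaller difference worth noting: for finite $r$ the paper does not iterate through intermediate exponents at all; it chooses the interpolation parameter $\sigma=(\zeta(m+r)-2)/(\zeta m-2)$ so that the lower-order quantity $\mathcal{I}_r$ collapses to $\|v\|_1$, yielding a closed linear differential inequality for $\|v\|_{r+1}^{r+1}$ directly from \eqref{b9} in a single step. Your finite chain of iterations would also work for finite $r$, but you should still verify at each step that the interpolation between $\|\nabla v^{(m+r)/2}\|_2$ and the previously controlled norm actually produces a damping term and a bounded source, rather than asserting the schematic inequality.
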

%%%%%%%%%%%%%%%%

\begin{proof}
Let $r\in [2-m,\infty)$. Multiplying \eqref{b7a} by $v^r$, integrating over $\mathbb{R}^N$, and using integration by parts, we obtain
\begin{align*}
\frac{1}{r+1} \frac{d}{ds}\|v\|_{r+1}^{r+1} + rm \int_{\mathbb{R}^N} v^{r+m-2} |\nabla v|^2\ dy + \int_{\mathbb{R}^N} v^{r+q}\ dy & = \left( \alpha - \frac{N\beta}{r+1} \right)\|v\|_{r+1}^{r+1}\ , \\
\frac{d}{ds}\|v\|_{r+1}^{r+1} + \frac{4mr(r+1)}{(m+r)^2} \left\| \nabla v^{(m+r)/2} \right\|_2^2 & \le \alpha (r+1) \|v\|_{r+1}^{r+1}\ .
\end{align*}
Since $4m r(r+1) \ge 2m (m+r)^2$, we end up with
\begin{equation}
\frac{d}{ds}\|v\|_{r+1}^{r+1} + 2m \left\| \nabla v^{(m+r)/2} \right\|_2^2 \le \alpha (r+1) \|v\|_{r+1}^{r+1}\ . \label{b11}
\end{equation}

We next fix $\zeta\in (2/m,2^*)$ where $2^* := 2N/(N-2)_+$ (with $2^*=\infty$ for $N=1,2$). On the one hand, it follows from the Gagliardo-Nirenberg inequality that
\begin{equation}
\left\| v^{(m+r)/2} \right\|_\zeta \le C \left\| \nabla v^{(m+r)/2} \right\|_2^\theta \left\| v^{(m+r)/2} \right\|_1^{1-\theta}\ , \label{b12}
\end{equation}
with
$$
\theta := \frac{2N(\zeta-1)}{(N+2)\zeta}\ .
$$
On the other hand, since $(m+r)/2\in [1,\zeta(m+r)/2]$ for $r\ge 2-m$, we infer from H\"older's inequality that
\begin{equation}
\| v \|_{(m+r)/2}^{(m+r)/2} \le \|v\|_{\zeta(m+r)/2}^{\zeta(m+r)(m+r-2)/2[\zeta(m+r)-2]} \|v\|_1^{(\zeta-1)(m+r)/[\zeta(m+r)-2]}\ . \label{b13}
\end{equation}
We deduce from \eqref{b12} and \eqref{b13} that
\begin{align*}
\|v\|_{\zeta(m+r)/2}^{(m+r)/2} & = \left\| v^{(m+r)/2} \right\|_\zeta\leq C\|\nabla v^{(m+r)/2}\|_2^{\theta}\left(\|v\|_{(m+r)/2}^{(m+r)/2}\right)^{1-\theta} \\
& \le C \left\| \nabla v^{(m+r)/2} \right\|_2^\theta \left[ \|v\|_{\zeta(m+r)/2}^{\zeta(m+r)(m+r-2)/2[\zeta(m+r)-2]} \|v\|_1^{(\zeta-1)(m+r)/[\zeta(m+r)-2]}\right]^{1-\theta}\ ,
\end{align*}
hence
\begin{equation}
\|v\|_{\zeta(m+r)/2}^{\zeta(m+r)[N(m+r)+2-N]/N[\zeta(m+r)-2]} \le C \left\| \nabla v^{(m+r)/2} \right\|_2^2 \|v\|_1^{[2N-(N-2)\zeta](m+r)/N[\zeta(m+r)-2]}\ . \label{b14}
\end{equation}
Moreover, since $\zeta>2/m$ and $m<1$, we have $2r\leq m[\zeta(m+r)-2]$, hence
\begin{align*}
\frac{[2N-(N-2)\zeta](m+r)}{N[\zeta(m+r)-2]} & \le \frac{[2N-(N-2)\zeta]}{N} \frac{m(m+r)}{2r} \\
& \le \frac{m[2N-(N-2)\zeta]}{N}\ ,
\end{align*}
so that
\begin{align}
\|v\|_1^{[2N-(N-2)\zeta](m+r)/N[\zeta(m+r)-2]} & \le C_1^{[2N-(N-2)\zeta](m+r)/N[\zeta(m+r)-2]} \nonumber \\
& \le C_1^{m[2N-(N-2)\zeta]/N}\ . \label{b15}
\end{align}
Also,
$$
1 - \frac{N [\zeta(m+r)-2]}{\zeta [N(m+r)+2-N]} = \frac{2N-\zeta(N-2)}{\zeta [N(m+r)+2-N]}>0\ ,
$$
and we infer from \eqref{b14}, \eqref{b15}, and Young's inequality that
\begin{align*}
\| v\|_{\zeta(m+r)/2}^{m+r} & \le \frac{N [\zeta(m+r)-2]}{\zeta [N(m+r)+2-N]} \|v\|_{\zeta(m+r)/2}^{\zeta(m+r)[N(m+r)+2-N]/N[\zeta(m+r)-2]} \\
& \quad + \frac{2N-\zeta(N-2)}{\zeta [N(m+r)+2-N]} \\
& \le C \left\| \nabla v^{(m+r)/2} \right\|_2^2 + 1 \ .
\end{align*}
Therefore, there is $\nu\in (0,1)$ depending only on $N$, $m$, $q$, and $u_0$ such that
\begin{equation}
\nu \left( \| v\|_{\zeta(m+r)/2}^{m+r} - 1 \right) \le \left\| \nabla v^{(m+r)/2} \right\|_2^2\ . \label{b16a}
\end{equation}
Moreover, since $r+1\in [1,\zeta(m+r)/2]$, it follows from \eqref{b9} and H\"older's and Young's inequalities that
\begin{align}
\|v\|_{r+1}^{r+1} & \le \|v\|_{\zeta(m+r)/2}^{\zeta r(r+m)/[\zeta(r+m)-2]} \|v\|_1^{[(\zeta-2)r+\zeta m -2]/(\zeta r + \zeta m - 2)} \nonumber\\
& \le C_1^{[(\zeta-2)r+\zeta m -2]/(\zeta r + \zeta m - 2)} \|v\|_{\zeta(m+r)/2}^{\zeta r(r+m)/[\zeta(r+m)-2]} \nonumber\\
& \le C_1 \|v\|_{\zeta(m+r)/2}^{\zeta r(r+m)/[\zeta(r+m)-2]} \nonumber\\
& \le \frac{\zeta m - 2}{\zeta(m+r)-2} C_1^{[\zeta(m+r)-2]/(\zeta m-2)} + \frac{\zeta r}{\zeta(m+r)-2} \|v\|_{\zeta(m+r)/2}^{m+r} \nonumber\\
& \le C_1^{[\zeta(m+r)-2]/(\zeta m-2)} + \|v\|_{\zeta(m+r)/2}^{m+r} \label{b16b}
\end{align}

Next, let $\sigma>1$ to be chosen appropriately later on and set
$$
\mathcal{I}_r(s) := \int_{\mathbb{R}^N} v(s,y)^{\zeta[r+m+\sigma(1-m)]/[\sigma(\zeta-2)+2]}\ dy\ , \qquad s\ge 0\ .
$$
Since $\sigma(\zeta-2)+2 \in [\zeta,\sigma\zeta]$ and
$$
r+1 = \frac{\sigma-1}{\sigma} (m+r) + \frac{m+r+\sigma(1-m)}{\sigma}\ ,
$$
we deduce from \eqref{b9} and  H\"older's and Young's inequalities that, for $\delta>0$,
\begin{align}
\|v\|_{r+1}^{r+1} & \le \|v\|_{\zeta(m+r)/2}^{(\sigma-1)(m+r)/\sigma} \mathcal{I}_r^{[\sigma(\zeta-2)+2]/\sigma\zeta} \nonumber\\
& \le \frac{(\sigma-1)\delta}{\sigma} \|v\|_{\zeta(m+r)/2}^{m+r} + \frac{1}{\sigma \delta^{\sigma-1}} \mathcal{I}_r^{[\sigma(\zeta-2)+2]/\zeta} \nonumber \\
& \le \delta \|v\|_{\zeta(m+r)/2}^{m+r} + \frac{1}{\delta^{\sigma-1}} \mathcal{I}_r^{[\sigma(\zeta-2)+2]/\zeta}\ . \label{b17}
\end{align}
Combining \eqref{b11}, \eqref{b16a}, and \eqref{b17} leads us to
\begin{align*}
\frac{d}{ds}\|v\|_{r+1}^{r+1} & + 2m \nu \left( \| v\|_{\zeta(m+r)/2}^{m+r} - 1 \right) \le \frac{d}{ds}\|v\|_{r+1}^{r+1} + 2m \left\| \nabla v^{(m+r)/2} \right\|_2^2 \\
& \le \alpha (r+1) \|v\|_{r+1}^{r+1} \\
& \le \alpha \delta (r+1) \|v\|_{\zeta(m+r)/2}^{m+r} + \frac{\alpha (r+1)}{\delta^{\sigma-1}} \mathcal{I}_r^{[\sigma(\zeta-2)+2]/\zeta}\ .
\end{align*}
We then choose $\delta = m\nu/\alpha(r+1)$ in the above inequality to obtain
\begin{align*}
\frac{d}{ds}\|v\|_{r+1}^{r+1} + m\nu \| v\|_{\zeta(m+r)/2}^{m+r} \le 2m \nu + \frac{\alpha^\sigma (r+1)^\sigma}{(m\nu)^{\sigma-1}} \mathcal{I}_r^{[\sigma(\zeta-2)+2]/\zeta}\ .
\end{align*}
We finally use \eqref{b16b} to estimate from below the second term of the left-hand side of the previous inequality and end up with
\begin{equation}
\frac{d}{ds}\|v\|_{r+1}^{r+1} + m\nu \| v\|_{r+1}^{r+1} \le 2m \nu + m \nu C_1^{[\zeta(m+r)-2]/(\zeta m-2)} + \frac{\alpha^\sigma (r+1)^\sigma}{(m\nu)^{\sigma-1}} \mathcal{I}_r^{[\sigma(\zeta-2)+2]/\zeta}\ . \label{b18}
\end{equation}
We first choose
$$
\sigma = \frac{\zeta(m+r)-2}{\zeta m -2}>1
$$
in \eqref{b18} and observe that this choice guarantees that
$$
\zeta [m+r+\sigma(1-m)] = \sigma (\zeta-2)+2\ .
$$
Consequently, $\mathcal{I}_r = \|v\|_1$ and we deduce from \eqref{b9} and \eqref{b18} that there is $C(r)>0$ depending on $N$, $m$, $q$, $u_0$, and $r$ such that
$$
\frac{d}{ds}\|v\|_{r+1}^{r+1} + m\nu \| v\|_{r+1}^{r+1} \le C(r)\ .
$$
Integrating the previous differential inequality entails that
\begin{equation}
\sup_{s\ge 0} \|v(s)\|_{r+1} < \infty\ . \label{b19}
\end{equation}
The validity of \eqref{b19} extends to all $r\in (0,2-m)$ by \eqref{b9} and H\"older's inequality.

To complete the proof of Lemma~\ref{Lemb10}, we are left to check the boundedness of $v$ in $L^\infty(\mathbb{R}^N)$. To this end, we take $\sigma = \sigma_0 := 2(\zeta-1)/(\zeta-2)>1$ in \eqref{b18} and obtain, after integration with respect to time,
\begin{align*}
\|v(s)\|_{r+1}^{r+1} & \le \|v_0\|_{r+1}^{r+1} e^{-m\nu s} + 2 + C_1^{[\zeta(m+r)-2]/(\zeta m-2)} \\
& \quad + \left( \frac{\alpha (r+1)}{m\nu} \right)^{\sigma_0} \left[ \sup_{s_*\in [0,s]} \mathcal{I}_r(s_*) \right]^2 \\
& \le \|v_0\|_1 \|v_0\|_\infty^r + 2 + C_1^{[\zeta(m+r)-2]/(\zeta m-2)} \\
& \quad + \left( \frac{\alpha (r+1)}{m\nu} \right)^{\sigma_0} \left[ \sup_{s_*\in [0,s]} \mathcal{I}_r(s_*) \right]^2\ ,
\end{align*}
and
$$
\mathcal{I}_r = \|v\|_{[(r+m)+\sigma_0(1-m)]/2}^{[(r+m)+\sigma_0(1-m)]/2}\ .
$$
Therefore, there are $K_0>0$ and $K_1>0$ depending only on $N$, $m$, $q$, and $u_0$ such that
\begin{equation}
\sup_{s\ge 0}\left\{ \|v(s)\|_{r+1}^{r+1} \right\} \le K_0 \left( K_1^{r+1} + (1+r)^{\sigma_0} \sup_{s\ge 0} \left\{ \|v(s)\|_{[(r+m)+\sigma_0(1-m)]/2}^{[(r+m)+\sigma_0(1-m)]} \right\} \right)\ . \label{b20}
\end{equation}
We now define the sequence $(r_j)_{j\ge 0}$ by
$$
1+r_{j+1} = 2 (1+r_j) - (1 - m)(\sigma_0-1)\ , \qquad j\ge 0\ , \qquad r_0 := 2-m\ ,
$$
and set
$$
V_j := \sup_{s\ge 0}\left\{ \|v(s)\|_{r_j+1}^{r_j+1} \right\} \ , \qquad j\ge 0\ .
$$
For $j\ge 0$, we take $r=r_{j+1}$ in \eqref{b20} and realize that
\begin{align*}
V_{j+1} & \le K_0 \left( K_1^{1+r_{j+1}} + (1+r_{j+1})^{\sigma_0} V_j^2 \right) \\
& \le K_0 (1+r_{j+1})^{\sigma_0} \max\left\{ K_1^{1+r_{j+1}} , V_j^2 \right\}\ , \qquad j\ge 0\ .
\end{align*}
Since $\sigma_0-1<1/(1-m)$ thanks to the constraint $\zeta>2/m$, one has $1+r_0 - (1-m)(\sigma_0-1)>0$ and we are in a position to apply \cite[Lemma~A.1]{La94}, which we recall in Lemma~\ref{Lemb100} below for completeness, to conclude that there is $K_2>0$ depending only on $m$, $\zeta$, $K_0$, and $K_1$ such that
$$
V_j^{1/(1+r_j)} \le K_2\ , \qquad j\ge 0\ .
$$
Equivalently,
$$
\sup_{s\ge 0} \{\|v(s)\|_{1+r_j}\} \le K_2\ , \qquad j\ge 0\ ,
$$
and letting $j\to\infty$ entails that $\|v(s)\|_\infty \le K_2$ for all $s\ge 0$, thereby completing the proof of Lemma~\ref{Lemb10}.
\end{proof}

The proof of Theorem~\ref{Thma8} for $r\in (1,\infty]$ is now a straightforward consequence of \eqref{b8} and Lemma~\ref{Lemb10}.

%%%%%%%%%%%%%%%%
\begin{lemma}\label{Lemb100}
Let $a>1$, $b\ge 0$, $c\in\mathbb{R}$, $C_0\ge 1$, $C_1\ge 1$, and $p_0>0$ be given such that $p_0 (a-1) + c >0$. We define the sequence $(p_k)_{k\ge 0}$ of positive real numbers by $p_{k+1} = a p_k + c$ for $k\ge 0$ and assume that $(Q_k)_{k\ge 0}$  is a sequence of positive real numbers satisfying
$$
Q_0\le C_1^{p_0}\ , \qquad Q_{k+1} \le C_0 p_{k+1}^b \max\left\{ C_1^{p_{k+1}} , Q_k^a \right\}, \ k\ge 0\ .
$$
Then the sequence $\left( Q_k^{1/p_k} \right)_{k\ge 0}$ is bounded.
\end{lemma}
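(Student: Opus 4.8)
The statement is a standard iteration lemma of Moser type, and the proof is elementary. The plan is to bound $\ln Q_k / p_k$ from above; this is equivalent to the assertion since $Q_k^{1/p_k} = \exp(\ln Q_k / p_k)$ and the $Q_k$ are positive. First I would solve the affine recursion explicitly:
\[
p_k = \frac{c}{1-a} + a^k \rho, \qquad \rho := p_0 - \frac{c}{1-a} = \frac{p_0(a-1)+c}{a-1},
\]
and note that the hypotheses $a>1$ and $p_0(a-1)+c>0$ force $\rho>0$. Consequently $(p_k)_{k\ge 0}$ is positive, increasing, diverges to $+\infty$, and there is $c_\star>0$ with $p_k\ge c_\star a^k$ for all large $k$; in particular $\sum_{k\ge 0} p_k^{-1}<\infty$, which will be used twice below.

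Next I would take logarithms. Writing $T_k := \ln Q_k / p_k$ and using $a p_k = p_{k+1}-c$, the recursion $Q_{k+1}\le C_0 p_{k+1}^b \max\{C_1^{p_{k+1}},Q_k^a\}$ becomes, after dividing by $p_{k+1}$,
\[
T_{k+1} \le \varepsilon_{k+1} + \max\{ \ln C_1,\ \mu_{k+1} T_k \}, \qquad \mu_{k+1} := \frac{a p_k}{p_{k+1}} = 1 - \frac{c}{p_{k+1}}, \quad \varepsilon_{k+1} := \frac{\ln C_0 + b\ln p_{k+1}}{p_{k+1}},
\]
where $\mu_{k+1}\to 1$ and $\sum_k (\varepsilon_k)_+<\infty$ thanks to the geometric growth of $p_k$. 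Since the comparison against $C_1$ dominates exactly when $Q_k$ is small and $\ln C_1\ge 0$, it is natural to track $U_k := \max\{T_k,\ln C_1\}\ge 0$; here $Q_0\le C_1^{p_0}$ gives $T_0\le\ln C_1$, so $U_0=\ln C_1$. Using only $U_k\ge\ln C_1\ge 0$ and $\mu_{k+1}T_k\le \mu_{k+1}U_k\le \nu_{k+1}U_k$ with $\nu_{k+1}:=\max\{1,\mu_{k+1}\}\ge 1$, one obtains the closed recursion $U_{k+1}\le \nu_{k+1}U_k + (\varepsilon_{k+1})_+$.

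The key point, and the only place requiring care, is that the amplification factor $\nu_{k+1}$ can exceed $1$, namely when $c<0$ (then $\mu_{k+1}=1-c/p_{k+1}>1$), so a naive iteration might a priori produce geometric growth of $U_k$. This is ruled out because $\ln\nu_{k+1}=(\ln(1-c/p_{k+1}))_+\le |c|/p_{k+1}$ and $\sum_k p_k^{-1}<\infty$, whence $\Pi:=\prod_{j\ge 1}\nu_j<\infty$. Iterating the recursion for $U_k$ (a discrete Gronwall step, using $\prod_{i=j+1}^k\nu_i\le\Pi$) then gives $U_k\le \Pi\, U_0 + \Pi\sum_{j\ge 1}(\varepsilon_j)_+=:U_\infty<\infty$ for every $k$, so $T_k\le U_k\le U_\infty$ and finally $Q_k^{1/p_k}=e^{T_k}\le e^{U_\infty}$, which is the claim. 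The remaining subtlety is purely one of signs: $T_k$ and $\varepsilon_k$ need not be non-negative (the latter only for the finitely many indices with $p_k<1$), so every estimate must be written as an upper bound and $\varepsilon_k$ replaced by $(\varepsilon_k)_+$ throughout; this does not affect the argument.
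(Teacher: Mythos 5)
Your argument is correct and complete. Note that the paper itself does not prove this lemma: it is quoted verbatim from \cite[Lemma~A.1]{La94} ("which we recall \dots for completeness"), so there is no in-paper proof to compare against; your write-up is a valid self-contained substitute. The structure is the standard one for such Moser-iteration lemmas (pass to $T_k=\ln Q_k/p_k$, close a discrete Gronwall recursion), and you have handled correctly the two points where such proofs usually go wrong: (i) the amplification factor $\mu_{k+1}=1-c/p_{k+1}$ may exceed $1$ when $c<0$, which you neutralize via $\sum_k p_k^{-1}<\infty$ (a consequence of $p_k=c/(1-a)+a^k\rho$ with $\rho>0$, itself exactly where the hypothesis $p_0(a-1)+c>0$ enters) so that $\prod_j\nu_j<\infty$; and (ii) the sign issues, resolved by working with $U_k=\max\{T_k,\ln C_1\}\ge 0$ (using $C_1\ge 1$) and with $(\varepsilon_k)_+$ (summable since $C_0\ge1$ and $p_k$ grows geometrically, the negative contribution $b\ln p_{k+1}$ occurring only for the finitely many $k$ with $p_{k+1}<1$). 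The one step worth writing out explicitly in a final version is the chain $\max\{\ln C_1,\mu_{k+1}T_k\}\le\nu_{k+1}U_k$, which uses $\mu_{k+1}>0$ (so that $\mu_{k+1}T_k\le\mu_{k+1}U_k$), $U_k\ge\ln C_1$, and $\nu_{k+1}\ge1$ together with $U_k\ge0$; all of these are available in your setup, so the proof stands.
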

%%%%%%%%%%%%%%%%

%%%%%%%%%%%%%%%%
%%%%%%%%%%%%%%%%
\section{Lower bound near the extinction time}
%%%%%%%%%%%%%%%%
%%%%%%%%%%%%%%%%

We now turn to the lower bound near the extinction time in $L^{m+1}(\mathbb{R}^N)$.

\begin{proof}[Proof of Theorem~\ref{Thma11}] For $t\in [0,T_e]$, we define
$$
X(t) := \|u(t)\|_{m+1}^{m+1} \;\;\text{ and }\;\; Y(t) := \int_{\mathbb{R}^N} u(t,x)^{m+q}\ dx\ .
$$
Let $t\in (0,T_e)$. It follows from \eqref{a1a} that
\begin{equation}
\frac{1}{m+1} \frac{dX}{dt}(t) + \|\nabla u^m(t)\|_2^2 + Y(t) = 0\ . \label{c1}
\end{equation}
Since
$$
1 < \frac{m+q}{m} < \frac{m+1}{m} < 2^* := \frac{2N}{(N-2)_+}
$$
by \eqref{a7} we infer from the Gagliardo-Nirenberg inequality that
\begin{align*}
X(t)^{m/(m+1)} & = \left\| u(t)^m \right\|_{(m+1)/m} \le C \left\| \nabla u^m(t) \right\|_2^\theta \left\| u(t)^m \right\|_{(m+q)/m}^{1-\theta} \\
& \le C Y(t)^{m(1-\theta)/(m+q)} \left\| \nabla u^m(t) \right\|_2^\theta\ ,
\end{align*}
where
$$
\theta := \frac{2Nm(1-q)}{(m+1) [m(N+2)-q(N-2)]}\ .
$$
Consequently, since $u(t)\not\equiv 0$ as $t\in (0,T_e)$,
$$
\left\| \nabla u^m(t) \right\|_2^2 \ge C X(t)^{2m/\theta(m+1)} Y(t)^{-2m(1-\theta)/\theta(m+q)}\ ,
$$
which gives, together with \eqref{c1},
\begin{equation}
\frac{dX}{dt}(t) + C X(t)^{2m/\theta(m+1)} Y(t)^{-2m(1-\theta)/\theta(m+q)} + (m+1) Y(t) \le 0\ . \label{c2}
\end{equation}
Setting
$$
\xi := 1 + \frac{2m(1-\theta)}{\theta(m+q)}>1 \;\;\text{ and }\;\; \gamma := \frac{2m}{\theta\xi(m+1)}\ ,
$$
it follows from Young's inequality that
\begin{align*}
X(t)^\gamma & = X(t)^\gamma Y(t)^{-(\xi-1)/\xi} Y(t)^{(\xi-1)/\xi} \le \frac{1}{\xi} X(t)^{\xi\gamma} Y(t)^{1-\xi} + \frac{\xi-1}{\xi} Y(t) \\
& \le X(t)^{2m/\theta (m+1)} Y(t)^{-2m(1-\theta)/\theta (m+q)} + Y(t)
\end{align*}
Combining this inequality with \eqref{c2} leads us to the differential inequality
\begin{equation}
\frac{dX}{dt}(t) + C X(t)^\gamma \le 0\ , \qquad t\in (0,T_e)\ . \label{c3}
\end{equation}
Now,
\begin{align*}
\gamma&=\frac{2m}{\theta\xi(m+1)}=\frac{2m(m+q)}{(2m+\theta(q-m))(m+1)}\\
&=\frac{2m(m+q)[m(N+2)-q(N-2)]}{2m\{(m+1)[m(N+2)-q(N-2)]+N(q-m)(1-q)\}}\\
&=\frac{m(N+2)-q(N-2)}{m(N+2)-qN+2}\in (0,1)\ ,
\end{align*}
and we integrate \eqref{c3} over $(t,T_e)$ to obtain
$$
-X(t)^{1-\gamma} + (1-\gamma) C (T_e-t) \le 0\ , \qquad t\in (0,T_e)\ .
$$
Noticing that
$$
(m+1)\alpha-N\beta=\frac{m+1}{1-q}-\frac{N(q-m)}{2(1-q)}=\frac{m(N+2)-qN+2}{2(1-q)}=\frac{1}{1-\gamma}\ ,
$$
the lower bound \eqref{a12} readily follows from the previous inequality.
\end{proof}

We end up this section with the derivation of the lower bound for $r\in (m+1,\infty]$ from Theorem~\ref{Thma8} for $r=1$ and Theorem~\ref{Thma11}.

\begin{proof}[Proof of Corollary~\ref{Cora13}]
We first note that, owing to \eqref{a7}, there holds $2/(q-m)>N$ and \eqref{a9} entails that $u_0\in L^1(\mathbb{R}^N)$. Since $u_0$ also belongs to $L^\infty(\mathbb{R}^N)$, we conclude that $u_0\in L^{m+1}(\mathbb{R}^N)$.

Let $r\in (m+1,\infty]$ and $t\in (0,T_e)$. We infer from Theorem~\ref{Thma8}, Theorem~\ref{Thma11}, and H\"older's inequality that
\begin{align*}
c_{m+1}^{m+1} (T_e-t)^{(m+1)\alpha - N\beta} & \le \|u(t)\|_{m+1}^{m+1} \le \|u(t)\|_r^{rm/(r-1)} \|u(t)\|_1^{(r-1-m)/(r-1)} \\
& \le C_1 (T_e-t)^{(\alpha-N\beta)(r-1-m)/(r-1)} \|u(t)\|_r^{rm/(r-1)}\ ,
\end{align*}
from which \eqref{a14} readily follows.
\end{proof}

%%%%%%%%%%%%%%%%
%%%%%%%%%%%%%%%%
\section{Everywhere positivity}
%%%%%%%%%%%%%%%%
%%%%%%%%%%%%%%%%

In this section, we assume that $0<m\le q <1$ and consider a non-negative initial condition $u_0\in BC(\mathbb{R}^N)$, $u_0\not\equiv 0$. We denote the corresponding solution to \eqref{a1a}-\eqref{a1b} by $u$ and define its extinction time by \eqref{a1c}. As in \cite{FV01}, the proof relies on an upper bound for $\partial_t u$ which we establish now.

%%%%%%%%%%%%%%%%
\begin{lemma}\label{Lemd1}
For $t>0$ there holds
$$
\partial_t u(t) \le \frac{u(t)}{(1-m) t} \;\;\text{ in }\;\; \mathbb{R}^N\ .
$$
\end{lemma}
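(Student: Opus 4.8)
The plan is to exploit a scaling invariance of \eqref{a1a} tied to the fast-diffusion exponent, combined with the comparison principle, to obtain the estimate in its integrated (monotonicity) form, and then to recover the differential inequality by differentiation. For $\lambda\ge 1$ I would introduce the rescaled function
$$
u_\lambda(t,x) := \lambda\, u\big( \lambda^{m-1} t , x \big)\ , \qquad (t,x)\in (0,\infty)\times\mathbb{R}^N\ ,
$$
the time dilation $\lambda^{m-1}$ being precisely the one that preserves the balance $\partial_t u \sim \Delta u^m$ under the amplitude scaling $u\mapsto \lambda u$. A direct computation using \eqref{a1a} gives
$$
\partial_t u_\lambda - \Delta u_\lambda^m + u_\lambda^q = \big( \lambda^q - \lambda^m \big)\, u\big( \lambda^{m-1}t, x\big)^q\ ,
$$
and here the hypothesis $m\le q$ is decisive: for $\lambda\ge 1$ one has $\lambda^q\ge\lambda^m$, so the right-hand side is non-negative and $u_\lambda$ is a \emph{supersolution} to \eqref{a1a}.

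Since $u_\lambda(0,\cdot)=\lambda u_0\ge u_0$ for $\lambda\ge 1$, the comparison principle yields $u_\lambda\ge u$ throughout $(0,\infty)\times\mathbb{R}^N$; that is,
$$
\lambda\, u\big( \lambda^{m-1}t, x\big) \ge u(t,x)\ , \qquad \lambda\ge 1\ , \quad (t,x)\in(0,\infty)\times\mathbb{R}^N\ .
$$
Setting $s=\lambda^{m-1}t\in(0,t]$ (recall $m-1<0$, so $\lambda\ge 1$ corresponds to $s\le t$) and $\lambda=(t/s)^{1/(1-m)}$, this is exactly the monotonicity statement that $s\mapsto s^{-1/(1-m)}u(s,x)$ is non-increasing on $(0,\infty)$ for each fixed $x$. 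The desired bound is simply the infinitesimal form of this monotonicity: writing $g(\lambda):=\lambda u(\lambda^{m-1}t,x)$, the inequality $g(\lambda)\ge g(1)$ for $\lambda\ge 1$ forces $g'(1^+)\ge 0$, and since
$$
g'(1) = u(t,x) + (m-1)t\,\partial_t u(t,x)\ ,
$$
dividing by $(m-1)t<0$ flips the inequality and produces precisely $\partial_t u(t,x)\le u(t,x)/[(1-m)t]$, which is the claim.

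The main obstacle is one of regularity rather than of strategy: since $u$ is only a weak solution of the degenerate equation \eqref{a1a}, the pointwise time derivative $\partial_t u$ and the differentiation in $\lambda$ above are not \emph{a priori} justified. I would circumvent this in the standard way by first establishing the supersolution computation and the resulting comparison for a sequence of smooth, uniformly parabolic approximations of \eqref{a1a} (regularising the diffusion and the absorption and adding a small constant to the data, so that classical solutions exist and the comparison principle applies cleanly), deriving the differential inequality there, and then passing to the limit. Equivalently, one may simply read off the conclusion from the monotonicity of $s\mapsto s^{-1/(1-m)}u(s,x)$, which requires no differentiability and already encodes the estimate, the differential form then holding in the almost-everywhere (equivalently distributional) sense in which $\partial_t u$ exists. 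A secondary point to address is the validity of the comparison principle between a supersolution and a solution of \eqref{a1a} on the whole of $\mathbb{R}^N$, which is by now classical for this class of equations and may be invoked directly.
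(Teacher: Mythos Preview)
Your proposal is correct and follows essentially the same scaling-plus-comparison strategy as the paper's proof, which adapts the homogeneity argument of B\'enilan--Crandall to the non-homogeneous operator $-\Delta u^m + u^q$ using $q\ge m$. The only cosmetic difference is the direction of the scaling: the paper rescales via $v(t):=\lambda^{1/(m-1)}u(\lambda t)$, checks it is a \emph{subsolution} with initial datum $\lambda^{1/(m-1)}u_0\le u_0$, and then combines comparison with monotonicity of the solution semigroup in the data, whereas you rescale via $u_\lambda(t):=\lambda\,u(\lambda^{m-1}t)$ and obtain a \emph{supersolution} with initial datum $\lambda u_0\ge u_0$, comparing directly with $u$; both routes yield the same monotonicity of $t\mapsto t^{-1/(1-m)}u(t,x)$ and hence the claimed bound.
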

%%%%%%%%%%%%%%%%

When $q=m$, Lemma~\ref{Lemd1} is a consequence of \cite[Theorem~2]{BC81}, the proof relying on an homogeneity argument. Though the operator $-\Delta u^m + u^q$ is not homogeneous, we may still adapt the proof of \cite[Theorem~2]{BC81} when $q\ge m$.

\begin{proof}
Given a non-negative initial condition $u_0\in BC(\mathbb{R}^N)$, we denote the corresponding solution to \eqref{a1a}-\eqref{a1b} at time $t\ge 0$ by $S(t) u_0$.  Recall that, if $u_0$ and $v_0$ are two non-negative functions in $BC(\mathbb{R}^N)$ satisfying $u_0\ge v_0$, then the comparison principle entails $S(t) u_0\ge S(t) v_0$ for all $t\ge 0$.

\smallskip

\noindent\textbf{Step~1.} We first claim that, for $\lambda\ge 1$,
\begin{equation}
S(\lambda t) u_0 \le \lambda^{1/(1-m)} S(t)\left( \lambda^{1/(m-1)} u_0 \right)\ , \qquad t\ge 0\ . \label{d2}
\end{equation}
Indeed, setting $u(t) := S(t) u_0$ for $t\ge 0$, the function $v$ defined by $v(t) := \lambda^{1/(m-1)} S(\lambda t) u_0$ satisfies
\begin{align*}
\partial_t v(t,x) - \Delta v^m(t,x) + v(t,x)^q & = \lambda^{m/(m-1)} \partial_t u(\lambda t,x) - \lambda^{m/(m-1)} \Delta u^m(\lambda t,x) \\
& \quad + \lambda^{q/(m-1)} u(\lambda t,x)^q \\
& = \left( \lambda^{q/(m-1)} - \lambda^{m/(m-1)} \right) u(\lambda t,x)^q \le 0\ .
\end{align*}
Since $v(0)=\lambda^{1/(m-1)} u_0 \le u_0$, we infer from the comparison principle that \eqref{d2} holds true.

\smallskip

\noindent\textbf{Step~2.} Now, fix $t>0$ and consider $h>0$. Since $\lambda = (1+h/t)>1$ and $m\in (0,1)$, we infer from \eqref{d2} and the comparison principle that
\begin{align*}
S(t+h) u_0 - S(t) u_0 & = S(\lambda t) u_0 - S(t) u_0 \\
& \le \lambda^{1/(1-m)} S(t)\left( \lambda^{1/(m-1)} u_0 \right) - S(t) u_0 \\
& \le \left[ \left( 1 + \frac{h}{t} \right)^{1/(1-m)} - 1 \right] S(t) u_0\ .
\end{align*}
Dividing the above inequality by $h$ and passing to the limit as $h\to 0$ complete the proof.
\end{proof}

We now argue as in the proof of \cite[Lemma~2.5]{FV01} to complete the proof of Proposition~\ref{Propa15}.

\begin{proof}[Proof of Proposition~\ref{Propa15}]
Fix $t\in (0,T_e)$ and assume for contradiction that $u(t,x_0)=0$ for some $x_0\in\mathbb{R}^N$. By \eqref{a1a} and Lemma~\ref{Lemd1}, there holds
$$
- \Delta u^m(t) + u(t)^q + \frac{u(t)}{(1-m)t} \ge 0 \;\;\text{ in }\;\; \mathbb{R}^N\ ,
$$
so that $u(t)^m$ is a supersolution to
$$
- \Delta w + d w = 0 \;\;\text{ in }\;\; \mathbb{R}^N\ ,
$$
with $d(x) := u(t,x)^{q-m} + u(t,x)^{1-m}/((1-m)t)$ for $x\in \mathbb{R}^N$. Since $t>0$ and $m\le q<1$, the function $d$ is non-negative and bounded and we infer from the strong maximum principle \cite[Theorem~8.19]{GT01} that $u(t)^m\equiv 0$ in $\mathbb{R}^N$, contradicting $t<T_e$. Consequently, $u(t)^m$ is positive everywhere in $\mathbb{R}^N$ and the proof of Proposition~\ref{Propa15} is complete.
\end{proof}

%%%%%%%%%%%%%%%%
%%%%%%%%%%%%%%%%
\section*{Acknowledgments} The first author is supported by
the ERC Starting Grant GEOFLUIDS 633152. Part of this
work was done while the first author enjoyed the hospitality and
support of the Institute de Math\'ematiques de Toulouse, Toulouse,
France.
%%%%%%%%%%%%%%%%
%%%%%%%%%%%%%%%%

%%%%%%%%%%%%%%%%
%%%%%%%%%%%%%%%%
\bibliographystyle{siam}
\bibliography{OptExtRatesFDEAbs}
%%%%%%%%%%%%%%%%
%%%%%%%%%%%%%%%%

\end{document}